\documentclass[10pt]{article}
\usepackage{indentfirst, latexsym, bm, amssymb,amsmath}

\topmargin  = -0.2 in \oddsidemargin = 0.25 in
\setlength{\textheight}{8.5in} \setlength{\textwidth}{6in}
\setlength{\unitlength}{1.0 mm}

\begin{document}

\newtheorem{theorem}{Theorem}[section]
\newtheorem{corollary}[theorem]{Corollary}
\newtheorem{definition}[theorem]{Definition}
\newtheorem{conjecture}[theorem]{Conjecture}
\newtheorem{question}[theorem]{Question}
\newtheorem{lemma}[theorem]{Lemma}
\newtheorem{proposition}[theorem]{Proposition}
\newtheorem{example}[theorem]{Example}
\newtheorem{problem}[theorem]{Problem}
\newenvironment{proof}{\noindent {\bf
Proof.}}{\rule{3mm}{3mm}\par\medskip}
\newcommand{\remark}{\medskip\par\noindent {\bf Remark.~~}}
\newcommand{\pp}{{\it p.}}
\newcommand{\de}{\em}

\newcommand{\JEC}{{\it Europ. J. Combinatorics},  }
\newcommand{\JCTB}{{\it J. Combin. Theory Ser. B.}, }
\newcommand{\JCT}{{\it J. Combin. Theory}, }
\newcommand{\JGT}{{\it J. Graph Theory}, }
\newcommand{\ComHung}{{\it Combinatorica}, }
\newcommand{\DM}{{\it Discrete Math.}, }
\newcommand{\ARS}{{\it Ars Combin.}, }
\newcommand{\SIAMDM}{{\it SIAM J. Discrete Math.}, }
\newcommand{\SIAMADM}{{\it SIAM J. Algebraic Discrete Methods}, }
\newcommand{\SIAMC}{{\it SIAM J. Comput.}, }
\newcommand{\ConAMS}{{\it Contemp. Math. AMS}, }
\newcommand{\TransAMS}{{\it Trans. Amer. Math. Soc.}, }
\newcommand{\AnDM}{{\it Ann. Discrete Math.}, }
\newcommand{\NBS}{{\it J. Res. Nat. Bur. Standards} {\rm B}, }
\newcommand{\ConNum}{{\it Congr. Numer.}, }
\newcommand{\CJM}{{\it Canad. J. Math.}, }
\newcommand{\JLMS}{{\it J. London Math. Soc.}, }
\newcommand{\PLMS}{{\it Proc. London Math. Soc.}, }
\newcommand{\PAMS}{{\it Proc. Amer. Math. Soc.}, }
\newcommand{\JCMCC}{{\it J. Combin. Math. Combin. Comput.}, }
\newcommand{\GC}{{\it Graphs Combin.}, }

\title{On the spectral radius of simple digraphs with prescribed number of  arcs\thanks{
 This work is supported by National Natural Science
Foundation of China (No.11271256), Innovation Program of Shanghai Municipal Education Commission (No.14ZZ016) and Specialized Research Fund for the Doctoral Program of Higher Education (No.20130073110075).
\newline \indent $^{\dagger}$Correspondent author:
Xiao-Dong Zhang (Email: xiaodong@sjtu.edu.cn)}}
\author{  Ya-Lei Jin  and Xiao-Dong Zhang$^{\dagger}$   \\
{\small Department of Mathematics, and Ministry of Education }\\
{\small Key Laboratory of Scientific and Engineering Computing, }\\
{\small Shanghai Jiao Tong University} \\
{\small  800 Dongchuan road, Shanghai, 200240, P.R. China}\\
}

\date{}
\maketitle
 \begin{abstract}
  This paper presents a sharp upper bound for the spectral radius of simple digraphs with described number of arcs.
   Further, the extremal graphs which attain the maximum spectral radius  among all simple digraphs with fixed arcs  are investigated. In particular,  we characterize all extremal simple digraphs with the maximum spectral radius among all simple digraphs with   arcs number $e=2{k\choose 2}+t$ and  $k>4t^4+4$.   \end{abstract}

{{\bf Key words:} Simple digraph; spectral radius; arc.
 }

      {{\bf AMS Classifications:} 05C50}.
\vskip 0.5cm

\section{Introduction}
 Let $D=(V,E)$(or $(V(D),E(D))$) be a simple digraph (i.e., no loops and no multiarcs) with vertex set $V$ and arc set $E$, where $|V|=n$ and $|E|=e$. The {\it loop} is the arc which starts and ends at a same vertex $v$. The {\it multiarcs} are the arcs which start at a same vertex $v_i$ and end at a same vertex $v_j$, where $v_i\neq v_j$. The {\it adjacent matrix} of $D$ is $A(D)=(a_{ij})$ or $A$ for short,
where $a_{ij}=1$ if there is an arc from $v_i$ to $v_j$, $0$ otherwise. Then by Perron-Frobenius theorem, there is an eigenvalue
 $\rho(D)$ which is the largest modula value of all eigenvalues of $A(D)$. Moreover, $\rho(D)$ is called the {\it spectral radius} of $D$. The {\it $n$-complete simple digraph}  is the simple digraph $\overleftrightarrow{K_n}$ in which every pair of vertices is an arc, while  {\it the $n$-complete  digraph} with loops is the digraph $\overleftrightarrow{K_n^0}$ in which every pair of vertices is an arc including a loop at each vertex. Hence $\overleftrightarrow{K_n}$ has $n(n-1)$ arcs while $\overleftrightarrow{K_n^0}$ has $n^2$ arcs. The {\it clique number} of a simple digraph $D$, denoted by $w(D)$, is the maximal integer $k$ such that the  $k$-complete simple digraph is a subgraph of $D$.

 In 1985,  Brualdi and Hoffman \cite{BrH} firstly investigated  the maximum
spectral radius  for a digraph (maybe have loops but no multiarcs) with $e$ arcs.
 \begin{theorem}\cite{Br}\label{brualdi1985}
 Let $D$ be a digraph (loops are allowed but no multiarcs) with $e=m^2$ or $e=m^2+1$.
 Then
 \begin{equation}\label{th1-1}
\rho(D)\le m
\end{equation}
with equality if and only if apart from isolated vertices, $D$ is a complete digraph $\overleftrightarrow{K_m^0}$ of order $m$ for $e=m^2$; $D$ is a complete digraph $\overleftrightarrow{K_m^0}$ of order $m$ with one
additional arc for $e=m^2+1$ and $m\ge 3$.
 \end{theorem}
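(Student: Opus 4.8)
I would treat the cases $e=m^{2}$ and $e=m^{2}+1$ separately. For $e=m^{2}$, write $A=A(D)$, a $(0,1)$-matrix with exactly $e$ ones. Since $\rho(M)\le\|M\|$ for the operator norm induced by any vector norm, taking the spectral ($\ell^{2}$) norm gives $\rho(D)=\rho(A)\le\sigma_{1}(A)\le\bigl(\sum_{i}\sigma_{i}(A)^{2}\bigr)^{1/2}=\|A\|_{F}=\sqrt{e}=m$, using $\|A\|_{F}^{2}=\sum_{i,j}a_{ij}^{2}=e$ for a $(0,1)$-matrix. In the equality case $\rho(A)=\sigma_{1}(A)=\|A\|_{F}$: the last equality forces $\sigma_{i}(A)=0$ for $i\ge2$, so $\operatorname{rank}A\le1$, say $A=\sigma_{1}uv^{\top}$ with unit $u,v\ge0$; the only nonzero eigenvalue of $A$ is $\sigma_{1}(v^{\top}u)$, so $\rho(A)=\sigma_{1}$ forces $v^{\top}u=1$, whence $u=v$ and $A=m\,uu^{\top}$. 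Being $(0,1)$ then forces $u_{i}\in\{0,m^{-1/2}\}$ with exactly $m$ nonzero coordinates, so $A$ is a copy of $J_{m}$ bordered by zeros; i.e.\ apart from isolated vertices, $D=\overleftrightarrow{K_{m}^{0}}$.

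For $e=m^{2}+1$ the same estimate only yields $\rho(D)\le\sqrt{m^{2}+1}$, and the extra work is to push this down to $m$. I would pass to an extremal digraph: among the finitely many digraphs with $e$ arcs and no isolated vertex (then $|V|\le2e$), pick $D$ with $\rho(D)$ maximal, and reduce to showing $\rho(D)\le m$. The key tool is the Brualdi--Hoffman rotation: if $x$ is a right Perron vector of $D$, $(u,j_{2})$ is an arc, $(u,j_{1})$ is not, and $x_{j_{1}}>x_{j_{2}}$, then deleting $(u,j_{2})$ and inserting $(u,j_{1})$ yields a matrix $B$ with $Bx\ge\rho(D)\,x$ and strict inequality in the $u$-th coordinate, so $\rho(B)\ge\rho(D)$; dually one may move an arc within a column toward a row with larger left-Perron entry. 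Such moves never decrease $\rho$, and iterating them one reaches a digraph with $e$ arcs and $\rho\ge\rho(D)$ whose adjacency matrix, after reordering rows and columns, is a \emph{staircase} matrix: $a_{ij}=1$ and $i'\le i$, $j'\le j$ imply $a_{i'j'}=1$. It therefore suffices to bound $\rho$ over staircase $(0,1)$-matrices with $m^{2}+1$ ones. For such a matrix the $1$'s in each row and each column form an initial segment, so this is a finite optimization over shapes (partitions of $m^{2}+1$); carrying it out, the maximum is $m$, attained exactly when the shape is an $m\times m$ block of $1$'s together with one further cell (row lengths $m,\dots,m,1$ or $m{+}1,m,\dots,m$), for which that extra cell lies outside every directed cycle and $\rho=\rho(J_{m})=m$. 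Hence $\rho(D)\le m$ for every $D$ with $m^{2}+1$ arcs.

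For the equality case with $e=m^{2}+1$, following $\rho(D)=m$ through the reduction forces $D$ to become one of these extremal staircases, and unwinding the rotations yields that $D$ itself is $\overleftrightarrow{K_{m}^{0}}$ with one additional arc. (A useful cross-check is provided by $\operatorname{tr}(A^{2})=\sum_{i}\lambda_{i}^{2}=\#\{\text{loops}\}+2\,\#\{\text{digons}\}$ and Schur's inequality $\sum_{i}|\lambda_{i}|^{2}\le\|A\|_{F}^{2}=e$, which confine $\operatorname{tr}(A^{2})$ to $\{m^{2},m^{2}+1\}$ and, when $\operatorname{tr}(A^{2})=e$, force $A$ symmetric, reducing matters to the undirected-graph version.) The cases $m=1,2$ must be handled by hand and genuinely carry extra extremal digraphs --- for $m=1$, e.g., the $2$-cycle also realizes $\rho=1$ with $m^{2}+1=2$ arcs --- which is why the characterization is stated only for $m\ge3$.

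The principal obstacle lies entirely in the case $e=m^{2}+1$: one must make the rotation reduction rigorous when the extremal matrix is reducible (the Perron vectors are then merely nonnegative and a single move need not increase $\rho$ \emph{strictly}), ensuring the process still halts at a staircase; and one must honestly carry out the maximization of the spectral radius over all staircase shapes with a prescribed number of cells. The $e=m^{2}$ bound, its equality case, and the general norm estimate are routine by comparison.
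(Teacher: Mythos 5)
This is Theorem~1.1 of the paper, which is quoted from Brualdi--Hoffman and not proved here, so there is no in-paper argument to compare yours against; I can only assess your attempt on its own terms. Your treatment of $e=m^{2}$ is complete and correct: $\rho(A)\le\sigma_{1}(A)\le\|A\|_{F}=m$ and the rank-one analysis of the equality case cleanly recovers $\overleftrightarrow{K_m^0}$. That half stands.

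The case $e=m^{2}+1$, however, contains a genuine gap, and it sits exactly where the theorem's content lies. After the rotation reduction you arrive at the claim that every staircase (stepwise) $(0,1)$-matrix with $m^{2}+1$ ones has spectral radius at most $m$, with equality only for the two shapes you name --- and then you write ``carrying it out, the maximum is $m$'' without carrying anything out. This is not a finite check: the family of staircase shapes with $m^{2}+1$ cells grows with $m$, and the statement must hold for all $m$, so one needs a uniform argument (this optimization is precisely what occupies Brualdi and Hoffman; the naive norm bound only gives $\sqrt{m^{2}+1}$, and closing the gap between $\sqrt{m^2+1}$ and $m$ for an arbitrary staircase shape is the whole difficulty). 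You candidly flag this, and also the unresolved issue that when the extremal digraph is reducible the Perron vector is only nonnegative and a rotation need not be strictly improving, so termination of the reduction at a staircase is not established. Two smaller points: the parenthetical ``cross-check'' is not quite right --- Schur's inequality together with $\rho=m$ only forces $\operatorname{tr}(A^{2})\ge m^{2}-1$ (since $\sum_{i\ge 2}\lambda_i^2$ can be negative), not $\operatorname{tr}(A^{2})\in\{m^{2},m^{2}+1\}$ --- and the uniqueness claim for the extremal staircases is itself part of what must be proved, since nearby shapes such as row lengths $(m{+}1,m{+}1,m,\dots)$ have to be excluded by computation or by a monotonicity argument, not by inspection. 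As it stands the proposal proves the $e=m^{2}$ statement and gives a plausible but incomplete programme for $e=m^{2}+1$.
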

 Later,  Friedland \cite{Fr}  characterized the extremal digraphs for several classes of the digraphs (maybe have loops but no multiarcs) with some other arcs numbers $e$.
\begin{theorem}\cite{Fr}\label{frienland1985}
Let $D$ be a digraph (maybe have loops but no multiarcs) with $e$ arcs.

(1). If $e=m^2+l$ and $1\le l\le 2m$, then

\begin{equation}\label{th2-1}
\rho(D)\le \frac{m+\sqrt{m^2+2l}}{2},
\end{equation}
with equality if and only if
$l = 2m$ and, apart from isolated vertices, $D$ is obtained from the complete
digraph $\overleftrightarrow{K_{m+1}^0}$ of order $m + 1$ by removing a loop at one vertex.

(2). If $e=m^2 +2m-3$ and $m\ge 3$, then
\begin{equation}\label{th2-2}
\rho(D)\le \frac{m-1+\sqrt{m^2+6m-7}}{2},
\end{equation}
with equality if and only if
$D$ is obtained from a complete digraph $\overleftrightarrow{K_{m+1}^0}$ of order $m +1$
by removing a complete digraph $ \overleftrightarrow{K_2^0}$ of order 2.

(3). If $l\ge 2$, there exists a constant $C_l$ such that if $m\ge C_l$,
 the maximum value of the spectral radius of  digraphs (loops are allowed but no multiarcs) with $e=m^2+l$  arcs
 can be achieved by the spectral radius of a digraph obtained from a complete digraph $\overleftrightarrow{K_m^0}$ of order $m$ by including a new vertex
$u$ and arcs in both directions joining $u$ and $\lfloor\frac{l}{2}\rfloor$
 vertices of $\overleftrightarrow{K_m^0}$, and, if $l$ is odd, an arc in either
direction joining $u$ and an additional vertex of $\overleftrightarrow{K_m^0}$.
  \end{theorem}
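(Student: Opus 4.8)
The plan is to pin down, for each value of $e$ in the stated ranges, the digraph (loops allowed, no multiarcs) that maximizes the spectral radius among all digraphs with $e$ arcs, and then read off the numerical bound and the equality case. Throughout I fix such an extremal digraph $D$ and work with the Perron--Frobenius structure of $A=A(D)$.

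\emph{Reduction.} Since $\rho$ is monotone under the addition of arcs and is unchanged by deleting isolated vertices, I pass to the strongly connected component of $D$ on which $\rho$ is attained; it has at most $e$ arcs, and I may assume its adjacency matrix $A$ is irreducible, with positive right and left Perron eigenvectors $x,y$ satisfying $Ax=\rho x$ and $y^{T}A=\rho y^{T}$. Bounding $\rho$ of this component suffices for the inequalities, while tracking the remaining arcs pins down the equality cases. Theorem~\ref{brualdi1985} already handles $e=m^{2}$ and $e=m^{2}+1$, which anchors the analysis near those values.

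\emph{Staircase structure of the extremal arc set.} The heart of the argument is to show that the arc set $S\subseteq V\times V$ of $D$ is a \emph{staircase}: ordering the vertices so that $y$ is nonincreasing makes the in-neighbourhoods nest; ordering so that $x$ is nonincreasing makes the out-neighbourhoods nest; and the loops occupy the top-left corner. The tool is a rotation lemma from the matrix-determinant formula: for an arc $(i,j)$ and a non-arc $(k,l)$, put $B=A-e_{i}e_{j}^{T}$; then the spectral radius of $B+e_{p}e_{q}^{T}$ is the largest root of $\sum_{r\ge 0}(B^{r})_{qp}\,t^{-r-1}=1$, whose left-hand side is positive and strictly decreasing on $(\rho(B),\infty)$. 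Comparing $(q,p)=(j,i)$ with $(q,p)=(l,k)$ shows that if $B$ carries at least as many $l\to k$ walks as $j\to i$ walks of every length, then $\rho\bigl(D-(i,j)+(k,l)\bigr)\ge\rho(D)$; feeding in the maximality of $D$ forces the comparabilities among the Perron products $y_{i}x_{j}$ that encode the staircase shape. Turning a single, genuinely discrete arc move into a valid comparison of spectral radii --- and controlling the ties it produces --- is exactly where this generating-function comparison is needed, and I expect it to be the principal obstacle.

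\emph{Enumeration of candidates and the numerical bounds.} Once $S$ is a staircase, for $e=m^{2}+l$ with $l$ near $2m$ (the regime of parts (1) and (2)) the complement of $S$ inside $\overleftrightarrow{K_{m+1}^{0}}$ is a nested set of at most $2m+1-l$ cells concentrated at one vertex, so only boundedly many digraphs remain; their spectral radii are the largest roots of small characteristic equations --- for instance $\rho^{2}-m\rho-m=0$ for $\overleftrightarrow{K_{m+1}^{0}}$ with one loop removed, attaining the bound \eqref{th2-1} at $l=2m$, and $\rho^{2}-(m-1)\rho-2(m-1)=0$ for $\overleftrightarrow{K_{m+1}^{0}}$ with a copy of $\overleftrightarrow{K_{2}^{0}}$ removed, attaining \eqref{th2-2} --- and comparing these roots identifies the maximizer and its equality case. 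The inequality \eqref{th2-1} for the full range $1\le l\le 2m$ is equivalent to $\rho^{2}+(\rho-m)^{2}\le e$, which I would get either from the explicit characteristic equation of the extremal staircase or by a direct estimate on $\rho=y^{T}Ax/(y^{T}x)$ that separates the $m\times m$ all-ones block from the excess cells and uses $2ab\le a^{2}+b^{2}$; the parallel bookkeeping gives \eqref{th2-2}. For part (3), with $l$ fixed and $m$ large, the best staircase instead keeps the full $m\times m$ block and appends a single new row and column meeting $\lfloor l/2\rfloor$ of its vertices symmetrically, with one extra one-sided cell when $l$ is odd --- precisely the digraph described in the statement --- and one verifies that it strictly beats every competing staircase, including those splitting the excess arcs over two new vertices or trimming the block asymmetrically; quantifying this gap is what fixes the threshold $C_{l}$.
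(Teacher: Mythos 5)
This statement is quoted from Friedland's paper \cite{Fr}; the present paper gives no proof of it, so your attempt can only be measured against Friedland's own argument. Your outline does track that argument: the reduction to a strongly connected component, the rearrangement to a ``staircase'' (stepwise) support via a Perron-product exchange, the block decomposition separating the leading all-ones square from the excess cells, and the generating-function identity $\sum_{r}(B^{r})_{qp}t^{-r-1}=1$ are exactly the ingredients Friedland uses (they reappear in this paper as Theorems~\ref{Fr1} and~\ref{Fr2} and the majorization lemma cited as Lemma~9 of \cite{Fr}). Your numerics also check out: the reformulation of \eqref{th2-1} as $\rho^{2}+(\rho-m)^{2}\le e$ is correct, and the quadratics $\rho^{2}-m\rho-m=0$ and $\rho^{2}-(m-1)\rho-2(m-1)=0$ do give the two equality values.

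The difficulty is that what you have written is a plan in which the decisive steps are announced rather than executed, and those steps are the theorem. Most seriously, in part (3) you say the symmetric attachment to $\overleftrightarrow{K_m^0}$ ``strictly beats every competing staircase'' and that ``quantifying this gap is what fixes $C_l$''; but ruling out staircases whose leading block is $(m-s)\times(m-s)$ for $s\ge 1$ requires genuine quantitative work --- bounds on $\mathbf{1}^{T}(A_{12}A_{21})^{i}\mathbf{1}$ via majorization fed into the series of Theorem~\ref{Fr2} --- of precisely the kind this paper carries out in Lemmas~\ref{LM} and~\ref{TL} for the loopless analogue; nothing in your sketch produces the constant $C_l$. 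Second, the equality assertion in part (1) is that the bound is attained \emph{only} when $l=2m$; your enumeration ``for $l$ near $2m$'' does not address strictness for general $1\le l\le 2m$, which comes from tracking equality through the chain $\nu(A_{12}A_{21})\le\nu(A_{12})\nu(A_{21})\le\sqrt{e_1e_2}\le(e_1+e_2)/2$ and seeing it forces $e_1=e_2=m$ with rank-one off-diagonal blocks. Third, you yourself flag the tie-handling in the staircase reduction as ``the principal obstacle'' and leave it open. So the route is the right one, but as it stands the proposal establishes neither the threshold in (3) nor the equality characterizations in (1) and (2).
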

On the other hand, Snellman \cite{Sn} proved the following result which in some sense is complementary to that of (3) in Theorem~\ref{frienland1985}.
\begin{theorem}\cite{Sn}\label{snellman}
Let $s\neq 4$ be a positive integer. Then there exists a constant $C_s$ such that if
$m> C_s,$
 the maximum value of the spectral radius of  digraphs (loops are allowed but no multiarcs) with $e=(m+1)^2-s$  arcs and $m+1$ vertices
  can be attained by a digraph obtained from $\overleftrightarrow{K_{m+1}^0}$ by removing the loop at a vertex
  $w$ together with $\lfloor\frac{s}{2}\rfloor$  pairs
of arcs (in both directions) between $w$ and other vertices, and in the case that $s$ is odd, one additional arc from $w$ to another vertex.
 \end{theorem}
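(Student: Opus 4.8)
\medskip
\noindent\textbf{Proof sketch (a plan).}
Write $n=m+1$, so the problem is to maximise $\rho(D)$ over all loopy, multiarc-free digraphs $D$ on $n$ vertices with exactly $e=n^{2}-s$ arcs, and the asserted extremal digraph $D^{\ast}$ is the one obtained from $\overleftrightarrow{K^{0}_{n}}$ by deleting the loop at a single vertex $w$ together with further arcs between $w$ and the rest, chosen so that the deleted out- and in-neighbourhoods of $w$ are nested and as balanced as the parity of $s$ permits (so the remaining $n-1$ vertices still induce $\overleftrightarrow{K^{0}_{n-1}}$). Fix an extremal $D$. The plan is to (i) force $\rho(D)$ to within $O(n^{-1})$ of $n$ and control its Perron eigenvectors; (ii) show that for $s\neq4$ one may assume all missing arcs are incident with one vertex; (iii) solve the resulting finite optimisation and read off $D^{\ast}$. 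For Stage (i): since $D^{\ast}\supseteq\overleftrightarrow{K^{0}_{n-1}}$ we have $\rho(D)\ge\rho(D^{\ast})\ge n-1$, while $\rho(D)\le\|A(D)\|_{2}=\sqrt{\rho\bigl(A(D)A(D)^{\top}\bigr)}\le\sqrt{\operatorname{tr}\bigl(A(D)A(D)^{\top}\bigr)}=\sqrt{e}<n$, so $\rho(D)\in[\,n-1,\,n\,)$. As only $s$ arcs are missing, every in- and out-degree is at least $n-s$; substituting this into the eigenvalue equations at the vertices of largest and smallest Perron weight yields $x_{\max}/x_{\min}\le\rho/(\rho-s)=1+O(s/n)$ for the right Perron vector $x$, and the same for the left Perron vector $y$, so (suitably normalised) every entry of $x$ and of $y$ equals $n^{-1/2}\bigl(1+O(s/n)\bigr)$.

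\emph{Stage (ii).} Let $\overline{D}$ be the digraph of missing arcs, $|\overline{D}|=s$. I claim that for $s\neq4$ and $n$ large one may take all arcs of $\overline{D}$ incident with a common vertex. The mechanism is a switching argument: if two distinct vertices $u,w$ both carry arcs of $\overline{D}$, re-insert the (say $k\ge1$) missing arcs at $u$ and instead delete $k$ suitably chosen arcs of $D$ at $w$. By the standard perturbation estimate, re-inserting an arc $u\to v$ raises $\rho$ by about $y_{u}x_{v}/(y^{\top}x)$, which by Stage (i) is $(1+o(1))/n$, whereas each deletion at $w$ lowers $\rho$ by a strictly smaller amount since $w$ is already ``light'' (its Perron weights lie below $n^{-1/2}$ and decrease further as deletions accumulate there); a careful accounting of the second-order corrections shows the net change is positive, contradicting extremality. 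The sole obstruction is $\overleftrightarrow{K^{0}_{n}}$ minus $\overleftrightarrow{K^{0}_{2}}$ — all four arcs inside some pair $\{u,w\}$, loops included — in which the defect is irreducibly spread over two vertices; by Theorem~\ref{frienland1985}(2) this digraph \emph{is} the extremal one precisely when $s=4$, which is exactly why $s=4$ is excluded, and the same comparison shows no two-vertex (or larger) configuration beats the one-vertex one for any $s\neq4$.

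\emph{Stage (iii).} Set $W=V(D)\setminus\{w\}$. By Stage (ii) the sub-digraph induced on $W$ is $\overleftrightarrow{K^{0}_{n-1}}$, so $A(D)$ is determined by $a_{ww}\in\{0,1\}$ together with $N^{+}(w),N^{-}(w)\subseteq W$, subject to $(1-a_{ww})+\bigl(n-1-|N^{+}(w)|\bigr)+\bigl(n-1-|N^{-}(w)|\bigr)=s$. Write $\beta=|N^{+}(w)|$, $\gamma=|N^{-}(w)|$, $\delta=|N^{+}(w)\cap N^{-}(w)|$. The Perron eigenvector of $D$ is constant on the two parts $N^{-}(w)$ and $W\setminus N^{-}(w)$ of $W$, and a short elimination (in which $\delta$ enters through the equation at $w$) gives the secular equation
\begin{equation*}
\bigl(\rho^{2}-a_{ww}\,\rho-\delta\bigr)\bigl(\rho-n+1\bigr)=\beta\gamma ,
\end{equation*}
with $\rho=\rho(D)$ its largest root and $\rho>n-1$ (because $D\supsetneq\overleftrightarrow{K^{0}_{n-1}}$ is strongly connected). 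This largest root is strictly increasing in each of $a_{ww}$, $\delta$, and $\beta\gamma$; together with $\beta+\gamma-(n-1)\le\delta\le\min(\beta,\gamma)$ and the arc-count constraint above, maximality forces $\delta=\min(\beta,\gamma)$ (nested deleted neighbourhoods) and $|\beta-\gamma|\le1$ (balanced), leaving only the two candidates $a_{ww}=1$ and $a_{ww}=0$. Substituting $\rho=n-\tfrac{s}{n}+\tfrac{c}{n^{2}}+O(n^{-3})$ into the two cubics, a short computation yields $c(0)-c(1)=\tfrac12(s+1)$ for $s$ odd and $c(0)-c(1)=\tfrac12(s-2)$ for $s$ even; hence $a_{ww}=0$ wins strictly for every $s\notin\{2,4\}$ once $n>C_{s}$, while for $s=2$ the two candidates have equal spectral radius (both being extremal). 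In every case with $s\neq4$ the maximiser is exactly $D^{\ast}$, which proves the theorem.

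\medskip
The step I expect to be the genuine obstacle is Stage (ii): the switching argument must be quantitatively sharp enough to decide between digraphs whose spectral radii differ only at order $n^{-2}$, and it is precisely at that resolution that the exceptional value $s=4$ — and the intrusion of the $\overleftrightarrow{K^{0}_{2}}$-deletion through Theorem~\ref{frienland1985}(2) — makes itself felt. By contrast, Stage (i) (the a priori estimates on $\rho$ and on the Perron vectors) and Stage (iii) (the secular equation, its monotonicity, and the final comparison of two cubics) are routine.
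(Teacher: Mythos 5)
First, a point of comparison: the paper does not prove this statement at all --- it is quoted from Snellman \cite{Sn} as background --- so there is no in-paper argument to measure your sketch against, and I can only judge it on its own terms. Stages (i) and (iii) are sound. The bounds $n-1\le\rho(D)\le\sqrt{n^2-s}<n$ and the ratio estimate $x_{\max}/x_{\min}\le\rho/(\rho-s)$ are correct; your secular equation $\bigl(\rho^{2}-a_{ww}\rho-\delta\bigr)(\rho-n+1)=\beta\gamma$ checks out (the Perron vector really is two-valued on $W$), as do the monotonicity in $a_{ww},\delta,\beta\gamma$ and the balancing conclusion. Indeed your Stage (iii) recovers the \emph{correct} extremal configuration --- the loop plus $\lceil s/2\rceil-1$ deleted arcs in one direction and $\lfloor s/2\rfloor$ in the other, $s$ deletions in total --- whereas the statement as transcribed here deletes $1+2\lfloor s/2\rfloor+[s\ \mathrm{odd}]=s+1$ arcs and is off by one; your version matches Snellman's actual theorem.

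The genuine gap is Stage (ii), and it is not a technicality: it is the theorem. The first-order switching heuristic cannot decide the question, because re-inserting an arc and deleting an arc each change $\rho$ by $(1+O(s/n))/n$, so gain and loss agree to leading order and the competing configurations are separated only at order $n^{-2}$ --- precisely the order at which the $s=4$ exception (Friedland's $\overleftrightarrow{K_{m+1}^0}$ minus $\overleftrightarrow{K_2^0}$) overtakes the one-vertex pattern. The sentence ``a careful accounting of the second-order corrections shows the net change is positive'' is the entire content of the result and is asserted rather than proved; nothing in the sketch explains why that accounting favours the one-vertex pattern for every $s\neq4$ yet fails for $s=4$, and the claim that ``the same comparison'' disposes of all multi-vertex configurations is likewise unsubstantiated. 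What is actually needed is a finite classification: the $s$ deleted arcs span at most $2s$ vertices, so for fixed $s$ there are finitely many deletion patterns; for each one must expand $\rho=n-\tfrac{s}{n}+\tfrac{c}{n^{2}}+O(n^{-3})$ --- e.g.\ via Friedland's series formula (Theorem~\ref{Fr2}) or the quotient-matrix characteristic polynomial, exactly as you did for the one-vertex patterns --- and then maximize $c$ over all patterns, verifying that the one-vertex nested balanced pattern wins except at $s=4$. Until that comparison is carried out for the multi-vertex patterns, the proof is incomplete.
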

 However, until now, the problem of characterizing all extremal graphs with the maximum spectral radius of all digraphs (loops  are allowed but no multiarcs) with fixed arcs number $e$ is not completely solved.  For the spectral radius of digraphs,  several upper bounds for digraphs in terms of digraph parameters, such as degree, clique number etc, can be found in \cite{butler2014,Ch, DrL,LiD,LiS,LiSW,Zhang2002}.
 For more  results on the spectra of the digraphs, you can refer to the excellent survey \cite{Br}.
 In the above theorems, they always considered the spectral radius of all digraphs with loops and the fixed number of arcs. It is  natural to ask  what is the  spectral radius of all simple digraphs (no loops, no multiarcs)  with the fixed number of arcs.     In this paper, we  mainly consider the following problem:
\begin{problem} Let $e$ be an integer and $\mathcal{D}(e)$ be the set all simple digraphs with the fixed number of arcs $e$. Denote by
\begin{equation}
\rho(e)=\max\{\rho(D)|D\in \mathcal{D}(e)\}.
\end{equation}
Determine the value  $\rho(e)$ in terms  of $e$  and  characterize all extremal simple digraphs with $\rho(e)$.
\end{problem}
This problem depends only on the number of arcs $e$ but not on the number of vertices
$n$. It is easy to see that $\rho(e)$ is an increasing function on $e$. Moreover, if $e=k(k-1)+t$ with $0\le t\le 2k-1$, let $ D^\#$ be the simple digraphs of order $k+1$ which are obtained from    $\overleftrightarrow{K_k}$ by adding a new vertex $v$ and  $\lfloor\frac{t}{2}\rfloor$ arcs in both directions joining $v$ and $\lfloor\frac{t}{2}\rfloor$  vertices of $\overleftrightarrow{K_k}$, and, if $t$ is odd, an arc in either
direction joining $v$ and an additional vertex of $\overleftrightarrow{K_k}$. Then
\begin{displaymath}
A(D^\#)^T=\left(\begin{matrix}J_k-I_k&\alpha_{\lfloor\frac{t}{2}\rfloor}\cr \alpha_{\lceil\frac{t}{2}\rceil}^T&0\end{matrix}\right) ~or ~
A(D^\#)=\left(\begin{matrix}J_k-I_k&\alpha_{\lfloor\frac{t}{2}\rfloor}\cr \alpha_{\lceil\frac{t}{2}\rceil}^T&0\end{matrix}\right),
\end{displaymath}
where $\alpha_{\lfloor\frac{t}{2}\rfloor}$ is a $k-$dimensional vector whose first $\lfloor\frac{t}{2}\rfloor$ components are $1$, the others are $0$. It is easy to see that
$ \rho(D^{\#}) $ is the largest positive  root of the equation
 $$\lambda^3-(k-2)\lambda^2-(k+\lfloor\frac{t}{2}\rfloor-1)\lambda+\lfloor\frac{t}{2}\rfloor
 (k-\lceil\frac{t}{2}\rceil-1)=0.$$
The main result of this paper may be stated as follows.
\begin{theorem}\label{mainth1}
Let $e=k(k-1)+t$ be a positive integer with $2\le t\le 2k-1$. If  $k>4t^4+4$, then $\rho(e)=\rho(D^{\#})$.
  Moreover, if $\rho(D)=\rho(e)$
 for $D\in \mathcal{D}(e)$, then apart from isolated vertices, $D=D^{\#}$. In other words,
 for any $D\in \mathcal{D}(e)$, $\rho(D)\le \rho(D^{\#})$ with equality if and only if apart from isolated vertices, $D=D^{\#}$.
 \end{theorem}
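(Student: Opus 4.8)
\medskip
\noindent\textbf{Proof strategy.} The plan is to reduce the problem, in three steps, to a one-line algebraic comparison that $D^{\#}$ wins. First, the usual reductions. Since $\rho(e)$ is non-decreasing in $e$ and adjoining an arc to a strongly connected digraph strictly increases its spectral radius, an extremal $D$ with $\rho(D)=\rho(e)$ is, apart from isolated vertices, strongly connected: otherwise its dominant strong component has fewer than $e$ arcs, and, since $2\le t\le 2k-1$ makes $e=k(k-1)+t$ differ from every $n(n-1)$ and every $n(n-1)+1$, one may keep adjoining arcs (and, once a complete digraph is reached, a fresh vertex joined in both directions to one of its vertices) until exactly $e$ arcs are present, each step strictly raising $\rho$ --- a contradiction. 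Hence $A=A(D)$ is irreducible; fix positive right and left Perron vectors $Ax=\rho x$, $A^{T}y=\rho y$ with $y^{T}x=1$. Also $w(D)\le k$, since $\overleftrightarrow{K_{k+1}}$ already has $(k+1)k>e$ arcs.

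The heart of the proof is the second step: $\overleftrightarrow{K_k}\subseteq D$, i.e.\ $w(D)=k$. Putting $\lambda=k-1$ in the cubic for $\rho(D^{\#})$ gives the value $-\lfloor t/2\rfloor\lceil t/2\rceil<0$, so $\rho(D^{\#})>k-1$; assume for contradiction $w(D)\le k-1$ while $\rho:=\rho(D)\ge\rho(D^{\#})>k-1$. Normalising $\max_ix_i=x_{i_0}=1$ gives $\mathrm{outdeg}(i_0)\ge\rho$, hence $\ge k$, while $(A^{2}x)_{i_0}=\rho^{2}$ forces the out-neighbours of $i_0$ to have out-degrees summing to more than $(k-1)^{2}$; iterating, and combining with the transposed statements and with $\sum_i\mathrm{outdeg}(i)=k(k-1)+t$, pins the number of non-isolated vertices to $k+1+O(t)$, all but $O(t)$ of them having in- and out-degree exactly $k-1$. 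In an extremal $D$ one moreover cannot redirect an in-arc to issue from a vertex of strictly larger left-Perron weight, nor an out-arc to point to one of strictly larger right-Perron weight (either move strictly increases $\rho$, by the Collatz--Wielandt inequality applied to the altered matrix); this local optimality, together with the degree count, forces the Perron-heaviest vertices to be pairwise joined by digons, and $k$ such vertices of out-degree $k-1$ induce a $\overleftrightarrow{K_k}$. Turning this outline into a proof means weighing the $\Omega(1/k)$ in $\rho$ that any missing digon costs against the at most $O(t^{2}/k^{2})$ the $t$ remaining arcs can recover, while absorbing the $O(t)$ exceptional vertices; carrying out this estimate is the main obstacle, and $k>4t^{4}+4$ is the explicit threshold that makes it work (with room to spare for uniqueness).

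For the third step, let $S$ be the vertex set of a $\overleftrightarrow{K_k}$ in $D$, $|S|=k$. The $t$ arcs meeting $V(D)\setminus S$ can be consolidated onto a single new vertex $v$ --- feasible because $t<2k$ always leaves enough free out- and in-slots at $v$ --- by redirections that route each such arc to a Perron-heaviest admissible target in $S$ and create a digon at $v$ whenever possible, none of which decreases $\rho$. This reduces $D$ to $\overleftrightarrow{K_k}$ plus a vertex $v$ joined to $S$ by $p$ out-arcs and $q$ in-arcs, $p+q=t$, overlapping in $o$ vertices. The partition of $V(D)$ into the in-neighbours of $v$ in $S$, the remaining vertices of $S$, and $v$ is equitable, so $\rho(D)$ is the largest root of $$f_{p,q,o}(\lambda)=\lambda^{3}-(k-2)\lambda^{2}-(k-1+o)\lambda+o(k-1)-pq,$$ which is the unique root exceeding $k-1$, a range on which $f_{p,q,o}$ is increasing. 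Since $f_{\lceil t/2\rceil,\lfloor t/2\rfloor,\lfloor t/2\rfloor}$ is exactly the cubic of $D^{\#}$, one computes $$f_{p,q,o}\big(\rho(D^{\#})\big)=\big(\lfloor t/2\rfloor-o\big)\big(\rho(D^{\#})-(k-1)\big)+\big(\lceil t/2\rceil\lfloor t/2\rfloor-pq\big)\ge0,$$ a sum of two nonnegative quantities; hence $\rho(D)\le\rho(D^{\#})$, with equality only when $o=\lfloor t/2\rfloor$ and $pq=\lceil t/2\rceil\lfloor t/2\rfloor$, i.e.\ only when $D=D^{\#}$. Tracking the strict inequalities back through the second and third steps then yields that, apart from isolated vertices, $D^{\#}$ is the unique extremal digraph, proving Theorem~\ref{mainth1}.
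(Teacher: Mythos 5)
Your overall architecture (reduce to strongly connected digraphs; show the clique number equals $k$; then collapse the remaining $t$ arcs onto one extra vertex and compare cubics) matches the paper's, and your third step is essentially sound: your polynomial $f_{p,q,o}$ agrees with what one gets from Friedland's series formula (the paper's Corollary~\ref{FrC}), and the identity $f_{p,q,o}(\rho(D^{\#}))=(\lfloor t/2\rfloor-o)(\rho(D^{\#})-(k-1))+(\lceil t/2\rceil\lfloor t/2\rfloor-pq)\ge 0$, combined with $f_{p,q,o}(k-1)=-pq<0$ and monotonicity on $[k-1,\infty)$, is a clean finish (the paper instead compares the two infinite series termwise in Lemma~\ref{Lw}). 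Your consolidation of the outside arcs onto a single vertex still needs the two-sided Perron-vector comparison $x^{T}(B-A)y>0$ that the paper carries out to get a \emph{strict} increase when $|V(D)|>k+1$, but that is the right idea.

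The genuine gap is your second step, which is the actual content of the theorem and the only place the hypothesis $k>4t^{4}+4$ is needed. You outline a degree-counting and local-swap argument and then write that ``carrying out this estimate is the main obstacle'' --- that estimate is precisely what is missing. Local optimality under arc redirections only shows the extremal digraph lies in the nested class the paper calls $\mathcal{D}^{**}(e)$ (Proposition~\ref{T2}); it does not by itself force $w(D)=k$, because a digraph with $w(D)=k-s$ could a priori compensate for the missing digons by distributing its surplus $2s(k-s)-s(s+1)+t$ arcs outside the clique. Ruling this out quantitatively is where the work is: the paper first uses Friedland's block bound to get $\rho(D)\le\frac{w-1+\sqrt{(w-1)^{2}+2(e-w(w-1))}}{2}$, forcing $w>k-1-\sqrt{t}$ (Lemma~\ref{Lu}); then, for $w=k-s$ with $1\le s<\sqrt{t}+1$, it bounds $\mathbf{1}^{T}(A_{12}A_{21})\mathbf{1}$ and $\|A_{12}^{T}\mathbf{1}\|\,\|A_{21}\mathbf{1}\|$ by majorization (Lemma~\ref{LM}) and feeds these into the series $\sum_{i\ge0}\mathbf{1}^{T}(A_{12}A_{21})^{i}\mathbf{1}/\bigl(r^{i}(r+1)^{i+1}\bigr)=1$ to conclude $\rho(D)\le k-1-\frac{2s^{2}}{3(k-1)}<k-1<\rho(D^{\#})$, and it is exactly this chain of inequalities that consumes $k>4t^{4}+4$. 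Without an argument of this kind your claim that the Perron-heaviest vertices ``are forced'' into a $\overleftrightarrow{K_{k}}$ is an assertion, not a proof; as written, the proposal proves the theorem only conditionally on its hardest step.
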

 In addition, for some values $e$, we characterize all extremal simple digraphs.
\begin{theorem}\label{mainth2}
(1). If $e=k(k-1),$  then $\rho(e)=k-1$. Moreover, $$\rho(D)=\rho(e)=k-1$$ for $D\in \mathcal{D}(e)$ if and only if $D$ is,  apart from isolated vertices,  complete simple digraph  $\overleftrightarrow{K_k}$.

(2). If $e=k(k-1)+1,$  then $\rho(e)=k-1$. Moreover, if $k>2$, then $$\rho(D)=\rho(e)=k-1$$ for $D\in \mathcal{D}(e)$ if and only if $D$ is,  apart from isolated vertices, the complete simple digraph $\overleftrightarrow{K_k}$ with one additional arc. If $k=2$, then $$\rho(D)=k-1$$ for $D\in \mathcal{D}(e)$ if and only if $D$ is, apart from isolated vertices, oriented triangle or  the complete simple digraph $\overleftrightarrow{K_2}$ with one additional arc.

(3). If $e=k(k-1)+2k-2$, then $\rho(e)=\frac{k-2+\sqrt{(k-2)^2+8(k-1)}}{2}$. Moreover, $$ \rho(D)= \rho(e)=\frac{k-2+\sqrt{(k-2)^2+8(k-1)}}{2}$$  for  $D\in \mathcal{D}(e)$ if and only if $D$ is,  apart from isolated vertices,  complete simple
digraph  $\overleftrightarrow{K_{k+1}}$  by removing complete simple digraph $\overleftrightarrow{K_2}$.

(4). If $e=k(k-1)+2k-1$, then $\rho(e)=\frac{k-1+\sqrt{(k-1)^2+4(k-1)}}{2}$. Moreover, $$ \rho(D)=\rho(e)=\frac{k-1+\sqrt{(k-1)^2+4(k-1)}}{2}$$ for $D\in \mathcal{D}(e)$ if and only if $D$ is,  apart from isolated vertices,  complete simple digraph  $\overleftrightarrow{K_{k+1}}$  by removing one arc.
\end{theorem}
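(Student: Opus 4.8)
Here is the plan I would follow.

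For the lower bounds, observe that for $t\in\{0,1,2k-2,2k-1\}$ the digraph $D^{\#}$ of the introduction is, respectively, $\overleftrightarrow{K_{k}}$ together with an isolated vertex; $\overleftrightarrow{K_{k}}$ with one pendant arc; $\overleftrightarrow{K_{k+1}}$ with a digon $\overleftrightarrow{K_{2}}$ deleted; and $\overleftrightarrow{K_{k+1}}$ with a single arc deleted. Substituting each such $t$ into the cubic $\lambda^{3}-(k-2)\lambda^{2}-(k+\lfloor t/2\rfloor-1)\lambda+\lfloor t/2\rfloor(k-\lceil t/2\rceil-1)=0$, the cubic factors off a root $\lambda=0$ (for $t=0,1,2k-2$) or $\lambda=-1$ (for $t=2k-1$), and the surviving quadratic has largest root exactly the value claimed for $\rho(e)$; this gives $\rho(e)\ge\rho(D^{\#})$, so only the matching upper bound and uniqueness remain.

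For the upper bound I would use a symmetrization inequality. If $A$ is the adjacency matrix of a simple digraph $D$ and $x\ge 0$ a right Perron eigenvector, then $x^{T}Ax$ is a scalar, so $x^{T}Ax=x^{T}A^{T}x=\rho(D)\,x^{T}x$; hence with $N:=\tfrac12(A+A^{T})$ we get $\rho(D)=x^{T}Nx/(x^{T}x)\le\lambda_{\max}(N)$. The matrix $N$ is symmetric with zero diagonal, has entries in $\{0,\tfrac12,1\}$ (value $1$ on a digon, $\tfrac12$ on a lone arc), and $\sum_{i<j}N_{ij}=e/2$. Taking a top unit eigenvector $y\ge 0$ of $N$, a fractional-knapsack exchange argument bounds $\sum_{i<j}N_{ij}y_{i}y_{j}$ by the sum of $y_{i}y_{j}$ over the $\lceil e/2\rceil$ pairs with largest product, which is at most $\tfrac12\lambda_{\max}$ of a simple graph with $\lceil e/2\rceil$ edges. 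Writing $g(m)$ for the maximum spectral radius over simple graphs with $m$ edges — so $g(m)\le\tfrac12(-1+\sqrt{8m+1})$, with equality iff the graph is a clique plus isolated vertices, and $g\bigl(\binom{n}{2}-1\bigr)$ uniquely attained by $K_{n}$ with one edge removed — I obtain $\rho(D)\le g(\lceil e/2\rceil)$, with the equality case forcing $N$ to be a $0$–$1$ matrix (so $D$ has no lone arc) whose underlying graph is the relevant extremal graph. For $e=k(k-1)$ this gives $\rho(D)\le g\bigl(\binom{k}{2}\bigr)=k-1$ with equality only for $D=\overleftrightarrow{K_{k}}$ apart from isolated vertices (part~(1)); for $e=k(k-1)+2k-2=2\bigl(\binom{k+1}{2}-1\bigr)$ it gives $\rho(D)\le g\bigl(\binom{k+1}{2}-1\bigr)=\rho(D^{\#})$ with equality only for $D=\overleftrightarrow{K_{k+1}}\setminus\overleftrightarrow{K_{2}}$ apart from isolated vertices (part~(3)).

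For the odd values $e=k(k-1)+1$ and $e=k(k-1)+2k-1$ this bound gives only $\rho(D)\le g(\lceil e/2\rceil)$, which strictly exceeds $\rho(D^{\#})$ — indeed it is not even tight at $D^{\#}$ itself, since a pendant arc becomes a genuine pendant edge after symmetrization and the left and right Perron vectors of $D^{\#}$ differ — so a direct argument is needed. I would pass to a strong component $C$ of the extremal $D$ with $\rho(C)=\rho(e)$; the arcs of $D$ outside $C$ can only attach as source/sink pendants, since otherwise they would lengthen a directed cycle and contradict maximality. Then split on whether $D$ contains $\overleftrightarrow{K_{k}}$. If it does, the remaining $1$ (case~(2)) or $2k-1$ (case~(4)) arcs admit only finitely many essentially distinct placements, which a compression (shifting) argument reduces to having them all on a single extra vertex; comparing the largest roots of the resulting small quotient polynomials shows the maximum is $\rho(D^{\#})$, attained only by $D^{\#}$ (in case~(4) one checks, e.g., that completing the extra vertex to a near-complete digon-neighbourhood plus one single arc beats wasting that arc on a new sink). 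If $D$ does not contain $\overleftrightarrow{K_{k}}$, one needs $\rho(D)<\rho(D^{\#})$. The exceptional second extremal graph in part~(2) when $k=2$ comes exactly from this branch: the directed triangle is the unique $\overleftrightarrow{K_{2}}$-free strongly connected simple digraph on three arcs with $\rho=1$.

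The hardest step, I expect, is precisely the estimate used in that last branch: a $\overleftrightarrow{K_{k}}$-free simple digraph with at most $k(k-1)+1$ arcs has $\rho<k-1$ (case~(2), $k\ge 3$), and a digraph with $(k+1)k$ arcs other than $\overleftrightarrow{K_{k+1}}$ has spectral radius at most $\rho(D^{\#})$ (case~(4)). The trouble is that $\overleftrightarrow{K_{k}}$-freeness alone does not cap $\rho$ at $k-1$ — e.g.\ the complete $(k-1)$-partite digon-digraph on $k+1$ vertices is $\overleftrightarrow{K_{k}}$-free yet has $\rho=\tfrac{(k-2)(k+1)}{k-1}>k-1$ for $k>3$ — so the argument must exploit the exact arc budget: bound the number of non-isolated vertices of a near-extremal digraph in terms of $e$, and only then apply a spectral Tur\'an type inequality. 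Granting this lemma, the remaining ingredients (the quotient-matrix computations, the compression step, the source/sink observation for arcs outside $C$, and the $k=2$ exception) are routine.
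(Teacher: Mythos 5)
Your lower-bound computations and your handling of parts (1) and (3) look sound, and they take a genuinely different route from the paper. The paper proves (1) and (3) by putting $A(D)$ into the block form of Proposition~\ref{PL1} and invoking Friedland's norm bound (Theorem~\ref{Fr1}, via Lemma~\ref{Lu}) together with the series formula of Corollary~\ref{FrC} to settle the clique-number-$k$ case (Lemma~\ref{Lw}); your symmetrization $\rho(D)\le\lambda_{\max}\bigl(\tfrac12(A+A^{T})\bigr)$, followed by the fractional relaxation and the Stanley/Rowlinson extremal results for undirected graphs with $e/2$ edges, bypasses all of that machinery, and since $e$ is even in these two cases the reduction to $g(e/2)$ is clean. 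The equality analysis does close: because the smallest Perron-weight product in $K_{k+1}$ minus an edge sits precisely on the missing edge, any redistribution of the mass of $N$ off the extremal graph strictly decreases the Rayleigh quotient, so every edge of the extremal graph must carry weight $1$, i.e.\ every arc of $D$ lies in a digon. This is arguably more elementary than the paper's argument, at the price of working only when $e$ is even.

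The genuine gap is in parts (2) and (4). For the branch where the extremal strongly connected digraph does not contain $\overleftrightarrow{K_k}$ (resp.\ is not $\overleftrightarrow{K_{k+1}}$ minus an arc) you write ``granting this lemma,'' and that lemma is essentially the whole content of the theorem in these cases; you correctly observe that clique-freeness alone cannot give it and that one must exploit the arc budget through the number of non-isolated vertices, but you do not supply the estimate. The paper's missing ingredient here is the inequality $\rho(D)<\sqrt{e-(n-1)}$ for a strongly connected, non-complete simple digraph on $n$ vertices (Corollary~3.6 of \cite{Zhang2002}, used in Lemmas~\ref{T5} and~\ref{T6}): since $e>k(k-1)$ forces $n\ge k+1$ non-isolated vertices, this yields immediately $\rho(D)<\sqrt{k(k-1)+1-k}=k-1$ in case (2), and $\rho(D)<\sqrt{k^2-2}<\rho(D^{\#})$ whenever $n\ge k+2$ in case (4), the case $n=k+1$ being forced to equal $\overleftrightarrow{K_{k+1}}$ minus one arc. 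Without this (or an equivalent) quantitative input your argument for (2) and (4) does not close; in addition, your ``compression plus finite check'' sketch for the branch of case (4) that does contain $\overleftrightarrow{K_k}$ is not carried out, whereas the paper handles it uniformly with the series formula of Corollary~\ref{FrC} in Lemma~\ref{Lw}. Also note the reduction to $\mathcal{D}^{**}(e)$ (Propositions~\ref{T1}--\ref{T2}) is unavailable for $t=1$, which is why the paper treats case (2) by a separate component argument rather than by compression.
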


The rest part of this paper is organized as follows. In section 2, some known useful and preliminary results are provided. In Sections  3 and 4, we present the proofs of Theorems~\ref{mainth1} and \ref{mainth2}, respectively.

\section{Preliminaries}

An $n\times n$ nonnegative matrix   $A=(a_{ij})\ge 0
 $ is  called {\it irreducible} if for all $1\le i, j\le n$ there is an integer $k$ such that $(A^k)_{ij}>0$. It is well known that $A(D)$ is
irreducible if and only if $D$ is strongly connected. If $A$ is irreducible, by Perron-Frobenius theorem, there exist two positive vectors $u,v$ such that $$\sum_{i=1}^nu_i=\sum_{i=1}^nv_i=1,~Au= \rho(A)u,~A^Tv=\rho(A)v,$$
and $u,v^T$ are called the {\it Perron} and {\it left Perron vector} of $A$. Let
$$\nu(A)=\sqrt{\rho(AA^T)},$$
where $A$ is a matrix (maybe not a square matrix), then $\nu(\cdot)$ is a {\it matrix norm} on $M_n(R)$ which is the set of all $n\times n$ real matrices (see p.~295-296\cite{HoJ}). Thus $\nu(\cdot)$ is a sub-multiplicative norm, i.e., for $A,~B\in M_n(R)$,
$$\nu(AB)\le\nu(A)\nu(B).$$
Furthermore, $\nu(\cdot)$ is an induced matrix norm induced by Euclidean vector norm $||\cdot||$. Then, for $x\in R^n$,
$$||Ax||^2=x^TA^TAx\le \nu(A)^2||x||^2.$$
The definition of $\nu(\cdot)$ implies that
$$\nu(A)^2=\nu(A^T)^2\le tr(AA^T).$$
Where $tr(AA^T)$ denotes the trace of $AA^T$. If $A$ is an $m\times n$ matrix and $B$ is an $n\times m$ matrix, then by using $\nu(\cdot)$ is a sub-multiplicative norm, we have
$$\nu(AB)\le \nu(A)\nu(B),$$
since $\nu(AB)=\nu([A,0]\cdot[B^T,0]^T)\le\nu([A,0])\nu([B^T,0]^T)= \nu(A)\nu(B)$ for $m\ge n$, where $[A,0],~[B^T,0]^T$ are $m\times m$ matrices. Similarly for $m< n$.
Friedland proved the following results in \cite{Fr}
\begin{theorem}\cite{Fr}\label{Fr1}
Let $\nu(\cdot)$ be the spectral norm of the matrices and
\begin{equation}\label{fr-1}
A=\left(\begin{matrix}0&A_{12}\cr A_{21}&0\end{matrix}\right),~B=\left(\begin{matrix}B_{11}&0\cr 0&0\end{matrix}\right).
\end{equation}
Then $\rho(A+B)\le \frac{\nu(B_{11})+(\nu(B_{11})^2+4\nu(A_{12}A_{21}))^{1/2}}{2}$.
\end{theorem}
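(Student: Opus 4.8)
The plan is to turn the eigenvalue problem for $M:=A+B$ into a scalar quadratic inequality for $\rho(M)$ by eliminating the second block of coordinates (a Schur complement type step), and then to control the matrix that results using only the triangle inequality, the absolute homogeneity of the spectral norm $\nu(\cdot)$, and the elementary bound $\rho(C)\le\nu(C)$, valid for every square matrix $C$ (it follows from $\|Cx\|\le\nu(C)\|x\|$ applied to an eigenvector, an inequality already recorded above).

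First I would write $M$ in the block form $M=\left(\begin{matrix}B_{11}&A_{12}\cr A_{21}&0\end{matrix}\right)$ coming from \eqref{fr-1}, and pick an eigenvalue $\mu$ of $M$ with $|\mu|=\rho(M)$. If $\mu=0$ there is nothing to prove, since the right-hand side of the asserted inequality is nonnegative; so assume $\mu\neq 0$. Partition a corresponding eigenvector as $x=(x_1^T,x_2^T)^T$ conformally with the blocks, so that $Mx=\mu x$ becomes $B_{11}x_1+A_{12}x_2=\mu x_1$ and $A_{21}x_1=\mu x_2$. If $x_1=0$, the second equation gives $x_2=0$ because $\mu\neq 0$, contradicting $x\neq 0$; hence $x_1\neq 0$. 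Substituting $x_2=\mu^{-1}A_{21}x_1$ into the first equation and multiplying by $\mu$ yields $(\mu B_{11}+A_{12}A_{21})x_1=\mu^2 x_1$, so $\mu^2$ is an eigenvalue of the square matrix $\mu B_{11}+A_{12}A_{21}$ (note $A_{12}A_{21}$ has the same order as $B_{11}$).

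The estimate then reads
\begin{align*}
\rho(M)^2=|\mu^2| &\le \nu(\mu B_{11}+A_{12}A_{21})\le |\mu|\,\nu(B_{11})+\nu(A_{12}A_{21})\\
&= \rho(M)\,\nu(B_{11})+\nu(A_{12}A_{21}),
\end{align*}
where I use $\rho(C)\le\nu(C)$ for the first inequality and then the triangle inequality for $\nu$ together with $\nu(cC)=|c|\,\nu(C)$. Writing $t=\rho(M)$, $b=\nu(B_{11})\ge 0$ and $c=\nu(A_{12}A_{21})\ge 0$, this says $t^2-bt-c\le 0$; since $t\ge 0$ and the parabola $t\mapsto t^2-bt-c$ opens upward, $t$ cannot exceed its larger root, i.e. $t\le\frac{b+\sqrt{b^2+4c}}{2}$, which is exactly the claimed bound.

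I do not anticipate a genuine obstacle; the two points needing a little care are keeping the block sizes consistent (so that $A_{12}A_{21}$ is square of the same order as $B_{11}$) and applying the norm inequalities when $\mu$ may be complex, both of which are routine once one recalls that the spectral norm $\nu$ is an absolutely homogeneous sub-multiplicative matrix norm dominating the spectral radius. If one prefers to avoid complex scalars, note that in the situation of this paper $A$ and $B$ are nonnegative, so Perron-Frobenius allows $\mu=\rho(M)\ge 0$ to be chosen real from the start, and the computation above is unchanged.
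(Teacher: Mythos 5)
Your argument is correct: the Schur--complement reduction $(\mu B_{11}+A_{12}A_{21})x_1=\mu^2 x_1$, followed by $\rho(C)\le\nu(C)$, the triangle inequality, and the resulting quadratic inequality $t^2\le bt+c$, gives exactly the stated bound, and you have handled the degenerate cases ($\mu=0$, $x_1=0$) and the complex-scalar issue properly. The paper itself offers no proof of this statement --- it is quoted from Friedland \cite{Fr} --- but your derivation is essentially Friedland's original argument, so there is nothing to add.
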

\begin{theorem}\cite{Fr}\label{Fr2}
Let $A,B$ be nonnegative matrices with the form $(\ref{fr-1})$ and
$$B_{11}=\beta uv^T-\gamma I,~ u,v\ge 0,~v^Tu=1,~\beta >\gamma >0.$$
Then $\rho(A+B)$ is the unique positive solution of $r$ of
$$\sum_{i=0}^{\infty}\beta \frac{v^T(A_{12}A_{21})^iu}{r^i(r+\gamma)^{i+1}}=1.$$
\end{theorem}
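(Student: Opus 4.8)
The plan is to characterise $r=\rho(M)$, where
$$M=A+B=\begin{pmatrix}B_{11}&A_{12}\\ A_{21}&0\end{pmatrix},$$
by reducing the block eigenvalue problem to a single scalar equation and then expanding a resolvent as a Neumann series. Write $P=A_{12}A_{21}$; since $A_{12},A_{21}\ge 0$ we have $P\ge 0$, so every coefficient $v^TP^iu$ in the claimed identity is nonnegative. The engine of the proof is the rank-one shape of $B_{11}=\beta uv^T-\gamma I$.

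First I would run the eigenvector computation to see where the series comes from. Let $(x,y)$ be a Perron vector of $M$, so that $B_{11}x+A_{12}y=rx$ and $A_{21}x=ry$. Since $r>0$, eliminate $y=\tfrac1r A_{21}x$ to obtain $(\beta uv^T-\gamma I)x+\tfrac1r Px=rx$, that is,
$$\Big((r+\gamma)I-\tfrac1r P\Big)x=\beta (v^Tx)\,u.$$
Assuming $M$ irreducible, $(x,y)>0$ and $c:=v^Tx>0$; solving for $x$, applying $v^T$, and dividing by $c$ gives the consistency condition $1=\beta\,v^T\big((r+\gamma)I-\tfrac1r P\big)^{-1}u$. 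Expanding $\big((r+\gamma)I-\tfrac1r P\big)^{-1}=\tfrac1{r+\gamma}\sum_{i\ge0}\tfrac{P^i}{r^i(r+\gamma)^i}$ yields exactly $\sum_{i\ge0}\beta\,\frac{v^TP^iu}{r^i(r+\gamma)^{i+1}}=1$.

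To make existence rigorous and to avoid any positivity hypothesis on the Perron vector, I would re-derive the same identity from the characteristic polynomial. Taking the Schur complement of the invertible block $rI$ (for $r>0$) gives
$$\det(rI-M)=r^{m}\det\Big((r+\gamma)I-\tfrac1r P-\beta uv^T\Big),$$
where $m$ is the size of the zero block, and the matrix determinant lemma turns this into $\det(rI-M)=r^{m}\det(Q(r))\big(1-\phi(r)\big)$ with $Q(r)=(r+\gamma)I-\tfrac1r P$ and $\phi(r)=\beta v^TQ(r)^{-1}u$, where $\phi$ is precisely the series above. The decisive point is to show $\det(Q(\rho(M)))\neq0$, so that the vanishing of $\det(rI-M)$ at $r=\rho(M)$ is forced by $\phi(\rho(M))=1$. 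Since $\det(Q(r))=0$ exactly when $r(r+\gamma)$ equals an eigenvalue of $P$, its largest root is $r_0=\tfrac{-\gamma+\sqrt{\gamma^2+4\rho(P)}}{2}$, and $r_0<\sqrt{\rho(P)}$ because $\gamma>0$. On the other hand $B\ge0$ gives $M\ge A$, hence $\rho(M)\ge\rho(A)=\sqrt{\rho(P)}>r_0$, using $\rho(A)^2=\rho(A^2)=\rho(A_{12}A_{21})=\rho(P)$. Thus $\rho(M)$ lies strictly to the right of every zero of $\det(Q(r))$, the series converges there, and $\phi(\rho(M))=1$.

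Uniqueness is the easy half: on $(r_0,\infty)$ each summand $\beta\,v^TP^iu\,r^{-i}(r+\gamma)^{-(i+1)}$ is nonnegative and nonincreasing in $r$, while the $i=0$ term $\beta/(r+\gamma)$ (recall $v^Tu=1$) is strictly decreasing and positive; hence $\phi$ is strictly decreasing, so $\phi(r)=1$ has at most one solution, which must be $\rho(M)$. The main obstacle is exactly the inequality $\rho(M)>r_0$: it is what guarantees convergence of the series at $r=\rho(M)$ and ensures that the Perron root is captured by the factor $1-\phi(r)$ and not by $\det(Q(r))$. The reducible case, where the Perron vector need not be positive (so $c=v^Tx$ might vanish), is handled cleanly by the determinant route, which never uses positivity; alternatively one perturbs $M$ to an irreducible nonnegative matrix of the same block shape and passes to the limit, using the monotonicity of $\phi$ for continuity of the root.
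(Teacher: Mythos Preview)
The paper does not contain a proof of this statement: Theorem~\ref{Fr2} is quoted from Friedland's paper \cite{Fr} and used as a black box (only its Corollary~\ref{FrC} with $B_{11}=J_k-I_k$ is ever invoked). So there is nothing in the present paper to compare your argument against.

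That said, your proof is correct and is essentially the standard derivation. The Schur-complement/matrix-determinant-lemma route is the clean way to make the eigenvector heuristic rigorous without assuming irreducibility, and your identification of the key inequality $\rho(M)>r_0$ (so that $\det Q(\rho(M))\neq 0$ and the Neumann series converges at $r=\rho(M)$) is exactly the point that needs care. One tiny edge case: your chain $\rho(M)\ge\rho(A)=\sqrt{\rho(P)}>r_0$ uses the strict inequality $\sqrt{\rho(P)}>r_0$, which fails when $\rho(P)=0$ (then both sides are $0$). In that degenerate case you still get $\rho(M)>0=r_0$ directly from $\rho(M)\ge\rho(B_{11})=\beta-\gamma>0$, since $B_{11}u=(\beta-\gamma)u$; you may want to add that one line. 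Otherwise the argument, including the strict monotonicity of $\phi$ via the $i=0$ term $\beta/(r+\gamma)$, is complete.
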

It follows from Theorem~\ref{Fr2} that
\begin{corollary}\label{FrC}
Let $A$ and $B$ be  nonnegative matrices with the form $(\ref{fr-1})$. If
$B_1=J_k-I_k$, then $\rho(A+B)$ is the unique positive solution of $r$ of
$$\sum_{i=0}^{\infty} \frac{\mathbf{1}^T(A_{12}A_{21})^i\mathbf{1}}{r^i(r+1)^{i+1}}=1,$$
where $\mathbf{1}$ is all ones vector.
\end{corollary}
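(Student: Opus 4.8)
The plan is to obtain Corollary~\ref{FrC} as a direct specialization of Theorem~\ref{Fr2}. The key observation is that $B_{11}=J_k-I_k$ already fits the rank-one-plus-scalar template required there. Writing $J_k=\mathbf{1}\mathbf{1}^T$ and taking $u=v=\frac{1}{\sqrt{k}}\mathbf{1}$, we have $u,v\ge 0$, $v^Tu=\frac{1}{k}\mathbf{1}^T\mathbf{1}=1$, and
$$\beta uv^T-\gamma I=\frac{\beta}{k}\mathbf{1}\mathbf{1}^T-\gamma I=\frac{\beta}{k}J_k-\gamma I,$$
so the choice $\beta=k$, $\gamma=1$ yields exactly $B_{11}=J_k-I_k$. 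For $k\ge 2$ this satisfies $\beta>\gamma>0$, and together with the standing assumptions that $A$ and $B$ are nonnegative and of the block form~(\ref{fr-1}) (so that $A_{12}A_{21}$ is $k\times k$, matching the size of $B_{11}$), all hypotheses of Theorem~\ref{Fr2} are met.

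Next I would substitute these values into the equation supplied by Theorem~\ref{Fr2}. Since $v^T(A_{12}A_{21})^iu=\frac{1}{k}\,\mathbf{1}^T(A_{12}A_{21})^i\mathbf{1}$, the prefactor $\beta=k$ cancels the $\frac{1}{k}$, and $\sum_{i=0}^{\infty}\beta\,\frac{v^T(A_{12}A_{21})^iu}{r^i(r+\gamma)^{i+1}}=1$ becomes precisely
$$\sum_{i=0}^{\infty}\frac{\mathbf{1}^T(A_{12}A_{21})^i\mathbf{1}}{r^i(r+1)^{i+1}}=1.$$
By Theorem~\ref{Fr2}, $\rho(A+B)$ is the unique positive root of this equation, which is the claim; convergence of the series for $r$ near $\rho(A+B)$ is inherited from Theorem~\ref{Fr2} and needs no separate argument.

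The argument is essentially bookkeeping, so there is no real obstacle; the only points worth a word are the degenerate case $k=1$ (where $J_1-I_1=0$, the inequality $\beta>\gamma$ fails, but the statement is then trivial) and keeping track of the fact that the all-ones vector $\mathbf{1}$ lives in $R^k$ throughout, so that both the normalization $v^Tu=1$ and the dimension match with $A_{12}A_{21}$ are legitimate.
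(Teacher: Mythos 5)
Your proposal is correct and is exactly the specialization the paper intends: the paper derives Corollary~\ref{FrC} directly from Theorem~\ref{Fr2} (it offers no further argument beyond ``It follows from Theorem~\ref{Fr2}''), and your choice $u=v=\frac{1}{\sqrt{k}}\mathbf{1}$, $\beta=k$, $\gamma=1$ is the natural way to realize $J_k-I_k$ in the required form $\beta uv^T-\gamma I$ with $v^Tu=1$ and $\beta>\gamma>0$. The substitution and cancellation of the factor $\frac{1}{k}$ are carried out correctly, so nothing further is needed.
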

In order to study the spectral radius of digraphs in $\mathcal{D}(e)$, we need more notations. Let $\mathcal{D^*}(e)$ be the set of  all simple strongly connected digraphs with $e$ arcs. In addition, let $\mathcal{D^{**}}(e)$ be the set of all simple  strongly connected digraphs whose vertex set $\{v_1, \cdots, v_n\}$ can be arranged  such that the following two conditions holds.
(i).  If $(v_i, v_j)\in E(D)$ with $1\le i<j\le n$,  then $(v_i, v_l)\in E(D)$ for $l=1, \cdots, j$ and $l\neq i$; (ii). $N^+(v_i)\setminus\{v_j\}\supseteq N^+(v_j)\setminus\{v_i\}$ for $1\le i<j\le n$, where $N^+(v_i)=\{v|~(v_i,v)\in E(D)\}$. It is easy to see that $D^{\#}\in \mathcal{D^{**}}(e), ~e=k(k-1)+t,~0\le t\le 2k-1,~t\neq 1$. In subsequence content, without loss of generality, for any $D\in \mathcal{D^{**}}(e)$, we always assume that  the vertices $\{v_1, \cdots, v_n\}$ of $D$ are arranged to satisfy  the above two conditions. Let $V_1\subset V(D)$, the {\it induced digraph} by $V_1$ in $D$ is denoted by $D[V_1]$, which is the graph with vertex set $V_1$ and edge set $E_1=\{(u,v)\in E(D)|u,~v\in V_1\}$. Moreover, it is easy to see that the  following proposition holds.
\begin{proposition}\label{PL1}
Let $D\in \mathcal{D}^{**}(e)$ with vertex set $\{v_1, \cdots, v_n\}$. If $w(D)=w$, then $D[\{v_1,v_2,...,v_w\}]\\=\overleftrightarrow{K_w}$, $(v_{i},v_{j}),(v_{j},v_{i})\notin E(D)$ with $i,~j>w$,  and $(v_w,v_j)\notin E(D)$ for $j=w+1,\cdots, n$ or $(v_j,v_w)\notin E(D)$ for $j=w+1,\cdots n$. In other words,
\begin{equation}\label{Equation3}
A(D)=\left(\begin{matrix}J_w-I_w&A_{12}\cr A_{21}&0\end{matrix}\right),
\end{equation}
where the last row of $A_{12}$ is a zero vector or the last column of $A_{21}$ is a zero vector.
\end{proposition}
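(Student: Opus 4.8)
The plan is to prove the three assertions in turn, reading condition (i) as ``a row of $A(D)$ that has an entry $1$ to the right of the diagonal is left-closed up to that entry'' and condition (ii) as ``the out-neighbourhoods $N^+(v_1),N^+(v_2),\dots$ are nested downward, modulo the swap of the two vertices being compared''. Throughout I will use that $D$ is strongly connected, since $\mathcal{D}^{**}(e)\subseteq\mathcal{D}^{*}(e)$.

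For the first assertion I would use a shifting argument. Suppose $S$ is a vertex set with $|S|=w$ and $D[S]=\overleftrightarrow{K_w}$ but $S\neq\{v_1,\dots,v_w\}$; let $v_p$ be the vertex of $S$ of largest index and $v_q$ the vertex of smallest index not in $S$, so $q<p$. For each $v_r\in S\setminus\{v_p\}$ one has $r<p$, so the forward arc $(v_r,v_p)$ and condition (i) give $v_r\to v_q$, while $v_p\to v_r$ together with condition (ii) for the pair $(q,p)$ gives $v_q\to v_r$; hence $(S\setminus\{v_p\})\cup\{v_q\}$ also induces $\overleftrightarrow{K_w}$. Since this exchange strictly decreases $\sum_{v_i\in S}i$, finitely many steps reach $S=\{v_1,\dots,v_w\}$, so $D[\{v_1,\dots,v_w\}]=\overleftrightarrow{K_w}$.

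For the second assertion, suppose there is an arc joining two vertices of index $>w$. If some such arc $(v_a,v_b)$ satisfies $a<b$, then condition (i) makes $v_a$ point to all of $v_1,\dots,v_w$; and if some $v_{l_0}$ with $l_0\le w$ failed to point to $v_a$, condition (ii) for the pair $(l_0,a)$ would force $v_{l_0}\to v_b$, and then condition (i) for the forward arc $(v_{l_0},v_b)$ would force $v_{l_0}\to v_a$, a contradiction; so $\{v_1,\dots,v_w,v_a\}$ would induce $\overleftrightarrow{K_{w+1}}$, contradicting $w(D)=w$. Hence every arc inside $\{v_{w+1},\dots,v_n\}$ would have to run from a larger index to a smaller one, making $D[\{v_{w+1},\dots,v_n\}]$ acyclic; I would then invoke strong connectivity to locate a vertex $v_j$ ($j>w$) receiving an arc from some $v_x$ with $x\le w$. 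That arc is forward, so condition (i) makes $v_x$ adjacent to everything up to index $j$, and pushing this through conditions (i) and (ii) once more again yields a $\overleftrightarrow{K_{w+1}}$. Thus $D[\{v_{w+1},\dots,v_n\}]$ has no arcs, which with the first assertion gives the block form of $A(D)$ displayed in the statement.

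Finally, for the last clause I would show that $v_w$ cannot both send an arc to some $v_p$ ($p>w$) and receive one from some $v_q$ ($q>w$). Condition (i) applied to $v_w\to v_p$, followed by condition (ii), makes every $v_l$ with $l\le w$ point to all of $v_1,\dots,v_p$; using the second assertion, the out-neighbourhoods of $v_{w+1},\dots,v_p$ are then totally ordered subsets of $\{v_1,\dots,v_w\}$, and feeding the arc $v_q\to v_w$ into this picture together with $w(D)=w$ should once more force a $\overleftrightarrow{K_{w+1}}$ (or, since the first $w$ vertices are only partially ordered by condition (ii), one may need to note that a relabelling of the clique vertices removes the bad configuration). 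I expect these last two paragraphs to be the main obstacle: conditions (i)--(ii) are strong on forward arcs but nearly vacuous on backward arcs among high-index vertices, so eliminating backward arcs there, and then pinning down the one-sidedness at $v_w$, is exactly where strong connectivity has to be combined carefully with the left-closure and nesting properties; by contrast the first assertion is a routine exchange argument.
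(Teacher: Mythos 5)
Your exchange argument for the first assertion is correct and complete, and your Case A for the second assertion (no forward arc $(v_a,v_b)$ with $w<a<b$ can exist, else $\{v_1,\dots,v_w,v_a\}$ would induce $\overleftrightarrow{K_{w+1}}$) is also a valid deduction from conditions (i) and (ii). For what it is worth, the paper offers no proof of this proposition at all --- it is asserted as ``easy to see'' --- so there is no argument of the authors to compare yours against.

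The two places you yourself flag as ``the main obstacle'' are, however, not merely technical: they cannot be closed, because conditions (i) and (ii) as literally defined do not imply the remaining claims. Take $n=4$ with $N^+(v_1)=\{v_2,v_3,v_4\}$, $N^+(v_2)=\{v_1,v_3\}$, $N^+(v_3)=\{v_2\}$, $N^+(v_4)=\{v_2\}$. One checks directly that this digraph is strongly connected and satisfies (i) and (ii) for this ordering, and that $w(D)=2$ with the clique on $\{v_1,v_2\}$; yet $(v_2,v_3)$ and $(v_3,v_2)$ are both arcs, so neither disjunct of the final clause holds (neither the last row of $A_{12}$ nor the last column of $A_{21}$ is zero). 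Enlarging $N^+(v_4)$ to $\{v_2,v_3\}$ preserves (i), (ii), strong connectivity and $w(D)=2$, and now also defeats the second assertion: $(v_4,v_3)$ is an arc between two vertices of index greater than $w$. This is exactly where your sketch says ``pushing this through conditions (i) and (ii) once more yields a $\overleftrightarrow{K_{w+1}}$'': in these examples $v_1$ does point to every other vertex, as your strong-connectivity step predicts, but no $\overleftrightarrow{K_{w+1}}$ is forced, because (i) and (ii) constrain only forward arcs and out-neighbourhoods and say essentially nothing about arcs running from a high index to a low one. To make the last two assertions provable one needs the mirror hypotheses on in-neighbourhoods (left-justified columns and nested $N^-$'s), which is presumably what the authors intend $\mathcal{D}^{**}(e)$ to carry but is not what the printed definition says; your parenthetical suggestion of relabelling the clique vertices does not rescue the argument, since reordering them destroys conditions (i)--(ii) for the new labelling.
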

 Without loss of generality, we always assume  that the last column of $A_{21}$ is a zero vector.  Further, for the fixed integer $e$, denote by
$$\rho^*(e)=\max\{\rho(D)|\ D\in \mathcal{D^*}(e)\},$$
$$\rho^{**}(e)=\max\{\rho(D)|\ D\in \mathcal{D^{**}}(e)\}.$$
Now we discuss the relationship among $\rho(e), \rho^*(e)$ and $\rho^{**}(e)$.

\begin{proposition}\label{T1}
Let $e=k(k-1)+t$ with $0\le t\le 2k-1.$ If $t\neq 1$, then $\rho(e)=\rho^*(e)$, in other words, the digraphs having the maximum spectral radius in $\mathcal{D}(e)$, apart from isolated vertices, are strongly connected.
\end{proposition}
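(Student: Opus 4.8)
The plan is to prove the slightly stronger structural statement that every $D\in\mathcal{D}(e)$ with $\rho(D)=\rho(e)$ is, after removing its isolated vertices, strongly connected; since such a $D$ then lies in $\mathcal{D^*}(e)$ and $\mathcal{D^*}(e)\subseteq\mathcal{D}(e)$, this yields $\rho(e)=\rho^*(e)$ at once. Fix an extremal $D$ and write $A(D)$ in Frobenius normal form, so it is block upper triangular with the strong components $D_1,\dots,D_p$ of $D$ (together with the isolated vertices) as diagonal blocks. The spectrum of a block triangular matrix is the union of the spectra of its diagonal blocks, so $\rho(D)=\max_i\rho(D_i)$; pick a strong component $D_0$ realizing the maximum and set $n_0=|V(D_0)|$, $e_0=|E(D_0)|$, so that $\rho(D_0)=\rho(e)$ and $e_0\le e$. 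Since $\overleftrightarrow{K_k}\in\mathcal{D}(e)$ when $t=0$ and $D^{\#}\in\mathcal{D}(e)$ when $t\ge 2$, we have $\rho(e)\ge k-1\ge 1$, hence $D_0$ contains a directed cycle; in particular $n_0\ge 2$ and $A(D_0)$ is irreducible. Everything then reduces to showing $e_0=e$, because this forces $D$ to consist of $D_0$ together with isolated vertices.

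Suppose, for contradiction, that $e_0<e$, and first treat the case $D_0\ne\overleftrightarrow{K_{n_0}}$. Here I would insert one missing arc into $D_0$: since $A(D_0)$ is irreducible, the Perron--Frobenius theorem guarantees that this strictly increases the spectral radius, yielding a digraph $D_0'$ with $e_0+1\le e$ arcs and $\rho(D_0')>\rho(e)$. Completing $D_0'$ to a simple digraph $D'$ with exactly $e$ arcs---always possible, for instance by adjoining a disjoint directed path---and using that adjoining arcs never decreases the spectral radius, we obtain $D'\in\mathcal{D}(e)$ with $\rho(D')\ge\rho(D_0')>\rho(e)$, contradicting the maximality of $\rho(e)$.

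The case $D_0=\overleftrightarrow{K_{n_0}}$ is the one I expect to be the real obstacle, because here the spectral radius of $D_0$ cannot be increased merely by adding an arc inside $D_0$, so the argument must instead exploit the explicit candidates. Now $\rho(e)=n_0-1$, and from $n_0(n_0-1)=e_0\le e\le k(k-1)+2k-1<k(k+1)$, together with the monotonicity of $x\mapsto x(x-1)$, we get $n_0\le k$, hence $\rho(e)\le k-1$; combined with $\rho(e)\ge k-1$ this forces $n_0=k$ and $e_0=k(k-1)$. If $t=0$, then $e_0=e$, contradicting $e_0<e$. If $t\ge 2$, then $D^{\#}\in\mathcal{D^{**}}(e)$ is strongly connected (as $\lfloor t/2\rfloor\ge 1$), its adjacency matrix is irreducible and dominates entrywise, but is not equal to, the $(k+1)\times(k+1)$ matrix obtained by bordering $J_k-I_k$ with a zero row and a zero column, so the Perron--Frobenius comparison principle gives $\rho(e)\ge\rho(D^{\#})>k-1$, contradicting $\rho(e)=k-1$. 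Thus in every case $e_0=e$, which proves the proposition.

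It is worth noting that the hypothesis $t\neq 1$ enters precisely at this last step: when $t=1$ the digraph $D^{\#}$ fails to be strongly connected and $\rho(D^{\#})=k-1$, so the comparison argument collapses; indeed one expects $\rho(e)>\rho^*(e)$ for $e=k(k-1)+1$, so the hypothesis cannot be dropped.
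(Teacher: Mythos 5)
Your proof is correct and follows essentially the same route as the paper's: decompose into strong components, add an arc when the dominant component is not complete, and when it is complete use $t\ge 2$ to beat it with a complete digraph carrying an extra vertex joined by a bidirected pair (your appeal to $D^{\#}$ plays exactly the role of the paper's auxiliary digraph $D_3$). The only cosmetic difference is that you first pin down $n_0=k$ by counting before invoking $D^{\#}$, whereas the paper appends the extra vertex to $\overleftrightarrow{K_w}$ directly.
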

\begin{proof} Clearly,  $\rho^*(e)\le \rho(e)$, because of $\mathcal{D^*}(e)\subsetneq\mathcal{D}(e)$. Conversely, let $D\in \mathcal{D}$  be a digraph  without isolated vertices such that $\rho(D)=\rho(e)$. Now we claim that $D$ has to be strongly connected. In fact, if $D$ is not strongly connected, then $A(D)$ is reducible. Hence there exists a permutation matrix $P$ such that
$$PA(D)P^T=\left(\begin{matrix}A(D_1)&0\\ A_{21}&A(D_2) \end{matrix}\right),$$
where $D_1$ is strongly connected with $e_1<e$ arcs and $\rho(D)=\rho(D_1)$,  and $\left(A_{21}, A(D_2)\right)$ contains at least one 1.  it is observed that
 the spectral radius of  digraphs is nondecreasing with respect to adding an arc, i.e., $\rho(e)\le \rho(e+1)$. If $D_1$ is not the simple complete digraph $\overleftrightarrow{K_w}$, then
 $\rho(D)=\rho(D_1)<\rho(D_1+a)\le \rho(e_1+1)\le\rho(e)$, where $a$ is an arc.
 it is a contradiction. If $D_1$ is  the simple complete digraph $\overleftrightarrow{K_w}$, then $\left(A_{21}, A(D_2)\right)$ contains at least $e-k(k-1)=t>1$ arcs.  Hence let $D_3$ be strongly connected digraph obtained from $D_1$ by adding a new vertex and joining bidirected arcs. Then  $\rho(e)=\rho(D)=\rho(D_1)<\rho(D_3)\le \rho(e_1+2)\le \rho(e)$. It is a contradiction. Hence $D$ is strongly connected and the assertion holds.
 \end{proof}
%{\bf Remark} $1$: If $e=k(k-1)+t,~0\le t\le 2k-1,~t\neq 1$ and $\rho(D)=\rho(e),~D\in \mathcal{D}$. Then $D$ is the simple digraph consisting of a strongly connected simple digraph and some isolated vertices, so it is sufficient to consider the above problem in $\mathcal{D^*}$.
\begin{proposition}\label{T2}
Let $e=k(k-1)+t,~0\le t\le 2k-1.$  If $t\neq 1$, then $\rho(e)=\rho^*(e)=\rho^{**}(e)$.
\end{proposition}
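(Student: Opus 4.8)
\medskip\noindent{\bf Proof plan.}\ By Proposition~\ref{T1} we already have $\rho(e)=\rho^*(e)$, and $\rho^{**}(e)\le\rho^*(e)$ is immediate from $\mathcal{D^{**}}(e)\subseteq\mathcal{D^*}(e)$; so it remains to prove $\rho^*(e)\le\rho^{**}(e)$, and the idea is to exhibit one member of $\mathcal{D^{**}}(e)$ whose spectral radius equals $\rho^*(e)$. (The case $e=0$ is trivial, so assume $e\ge2$.) Fix $D\in\mathcal{D^*}(e)$ with $\rho(D)=\rho^*(e)$, let $u>0$ be its Perron vector, and relabel the vertices so that $u_1\ge u_2\ge\cdots\ge u_n$. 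I would use a single operation, a {\it row compression}: let $D_1$ be the digraph on the same vertex set with $N^+_{D_1}(v_i)$ equal to the set of the $|N^+_D(v_i)|$ vertices of largest $u$-value other than $v_i$ (ties broken by index), so that $N^+_{D_1}(v_i)=\{v_1,\dots,v_{p_i}\}\setminus\{v_i\}$ for a suitable $p_i$ and condition~(i) in the definition of $\mathcal{D^{**}}(e)$ holds automatically. Out-degrees are preserved, so $D_1\in\mathcal{D}(e)$, and $D_1$ has no isolated vertex because each out-degree is inherited from the strongly connected $D$ and hence is at least $1$. Moreover, for every $i$ the new row weight $\sum_{v_l\in N^+_{D_1}(v_i)}u_l$ is at least the old one $\sum_{v_l\in N^+_{D}(v_i)}u_l=\rho^*(e)u_i$, so $A(D_1)u\ge\rho^*(e)u$ coordinatewise, giving $\rho(D_1)\ge\rho^*(e)$; combined with $\rho(D_1)\le\rho(e)=\rho^*(e)$ this yields $\rho(D_1)=\rho^*(e)$.

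The step I expect to be the main obstacle is that row compression can destroy strong connectedness, so I would next prove that $D_1$ is strongly connected anyway. If it were not, the value $\rho(D_1)=\rho^*(e)$ would be attained on some strongly connected component $C_0$ of $D_1$; let $e_0\le e$ be the number of arcs of $D_1[C_0]$. If $e_0=e$, then every vertex outside $C_0$ is incident to no arc, hence isolated, which is impossible, so $D_1=D_1[C_0]$ is strongly connected---a contradiction. If $e_0<e$, there is an arc $a$ of $D_1$ outside $C_0$, and either $D_1[C_0]$ is not a complete digraph, in which case deleting $a$ and inserting a missing arc inside the (strongly connected) $D_1[C_0]$ produces a digraph in $\mathcal{D}(e)$ whose spectral radius exceeds $\rho(D_1[C_0])=\rho^*(e)=\rho(e)$ (adding an arc strictly increases the spectral radius of a strongly connected digraph)---impossible; or $D_1[C_0]=\overleftrightarrow{K_{k'}}$, in which case $k'(k'-1)=e_0<e\le k(k-1)+2k-1<(k+1)k$ forces $k'\le k$, so $\rho^*(e)=k'-1\le k-1$, contradicting $\rho^*(e)\ge k-1$: when $t\ge2$ one has $D^\#\in\mathcal{D^*}(e)$ and $\rho(D^\#)>k-1$ (the cubic displayed above has value $-\lfloor t/2\rfloor\lceil t/2\rceil<0$ at $\lambda=k-1$, so its largest root exceeds $k-1$), and when $t=0$ the bound $\rho^*(e)\ge k-1$ from $\overleftrightarrow{K_k}\in\mathcal{D^*}(e)$ forces $k'=k$ and $e_0=k(k-1)=e$, again against $e_0<e$. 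Hence $D_1$ is strongly connected.

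With $D_1$ strongly connected, $A(D_1)$ is irreducible; let $\tilde v>0$ be its left Perron vector, so $\tilde v^{T}A(D_1)=\rho^*(e)\tilde v^{T}$, and pairing with $A(D_1)u-\rho^*(e)u\ge0$ gives $\tilde v^{T}\bigl(A(D_1)u-\rho^*(e)u\bigr)=0$, hence $A(D_1)u=\rho^*(e)u$; thus $u$ is the Perron vector of $D_1$ and is still non-increasing. It then remains only to show the widths are non-increasing, that is, $p_i\ge p_j$ whenever $i<j$. Writing $\varepsilon_i\in\{0,1\}$ for the quantity that is $1$ exactly when $i\le p_i$, the Perron equation at $v_i$ reads $\sum_{l=1}^{p_i}u_l=(\rho^*(e)+\varepsilon_i)u_i$; if $p_i<p_j$ then $(\rho^*(e)+\varepsilon_i)u_i<(\rho^*(e)+\varepsilon_j)u_j$, which with $u_i\ge u_j$ forces $\varepsilon_i=0$ and $\varepsilon_j=1$, i.e.\ $i>p_i$ and $j\le p_j$; but then, using $u_{p_i+1}\ge u_i$ (as $i\ge p_i+1$), the two Perron equations give $u_i=u_j$ and $p_j=p_i+1$, so $i\ge p_i+1=p_j$ and therefore $j>p_j$, contradicting $j\le p_j$. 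Since in $D_1$ each $N^+(v_i)$ is an initial segment of $v_1,\dots,v_n$ with $v_i$ deleted, $p_i\ge p_j$ for $i<j$ at once gives $N^+(v_i)\setminus\{v_j\}\supseteq N^+(v_j)\setminus\{v_i\}$, which is condition~(ii). Consequently $D_1\in\mathcal{D^{**}}(e)$, so $\rho^{**}(e)\ge\rho(D_1)=\rho^*(e)$, and with the opening remarks this proves $\rho(e)=\rho^*(e)=\rho^{**}(e)$.
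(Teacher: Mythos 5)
Your proof is correct and follows essentially the same route as the paper's: reorder the vertices by the Perron vector, compress each out-neighbourhood toward the top of that ordering, show that the spectral radius (and the Perron vector itself) survives the operation, and then read conditions (i) and (ii) off the Perron equations. The differences are only in presentation --- you compress in one step rather than arc by arc, and you supply two justifications the paper leaves implicit, namely the strong connectedness of the compressed digraph and the left-Perron-vector argument showing that $u$ remains an eigenvector.
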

\begin{proof}
It follows from Proposition~\ref{T1} that it is sufficient to prove the assertion if $\rho(D)=\rho(e)$ for $D\in \mathcal{D^*}(e)$,  then there exists a simple digraph $D'\in \mathcal{D^{**}}(e)$ such that $\rho(D')=\rho(e)$.  Let $x=(x_1, \cdots, x_n)^T$ be Perron vector of $A(D)$ with $x_1\ge x_2\cdots\ge x_n$.  If there exist $1\le i\le j$ and $1\le l<j$ with $l\neq i$ such that $(v_i, v_j)\in E(D)$ and $(v_i, v_l)\notin E(D)$, then let $D_1$ be a simple digraph with $e$ arcs obtained from $D$ by deleting an arc $(v_i, v_j)$ and adding an arc $(v_i, v_l)$. Clearly, $A(D_1)x\ge \rho(e)x$ which implies that
$ \rho(D_1)\ge\rho(e).$ Hence $ \rho(e)=\rho(D_1)=\rho(e)$ and $x$ is also Perron vector of $A(D_1)$. Further, by Proposition~\ref{T1}, $D_1$ is strongly connected.
By repeating this process,  there exists a simple strongly connected digraph $D_2$ such that $x$ is the Perron vector of $A(D_2) $ corresponding to $\rho(e)$, and satisfies the following proposition that if $(v_i, v_j)\in E(D)$ with $1\le i<j\le n$,  then $(v_i, v_l)\in E(D)$ for $l=1, \cdots, j$ and $l\neq i$. Further, we claim   $N^+(v_i)\setminus\{v_j\}\supseteq N^+(v_j)\setminus\{v_i\}$ for $1\le i<j\le n$. In fact, if $N^+(v_i)\setminus \{v_j\}\nsupseteq N^+(v_j)\setminus \{v_i\}$, then by (1), we have $d(v_i)<d(v_j)$. We consider the  following three cases:

{\bf Case 1}. $d(v_j)\le i$, then $0\le \rho(e)(x_i-x_j)=-\sum\limits_{t=d(v_i)+1}^{d(v_j)}x_t<0$, which  is a contradiction.

{\bf Case 2}. $i<d(v_j)\le j$, then $0\le \rho(e)(x_i-x_j)\le -\sum\limits_{t=d(v_i)+1}^{d(v_j)}x_t<0$, which  is a contradiction.

{\bf Case 3}. $j<d(v_j)$, then $ \rho(e)(x_i-x_j)\le -\sum\limits_{t=\max\{d(v_i),i\}+1}^{d(v_j)}x_t+x_j-x_i$. Hence $0\le (\rho(e)+1)(x_i-x_j)$

$\le -\sum\limits_{t=\max\{d(v_i),i\}+1}^{d(v_j)}x_t<0$, which is also a contradiction.\\
Hence $D_2\in \mathcal{D^{**}}(e)$ and $\rho(D_2)=\rho(e)$. This completes the proof.
\end{proof}
\begin{corollary}\label{EL6}
Let $~e=k(k-1)+t$ with $0\le t\le 2k-1$ and $t\neq 1$.
 If $D^\#$ is the only simple digraph having the maximum spectral radius in the set $\mathcal{D^{**}}(e)$, then apart from isolated vertices, $D^\#$ is the only simple digraph having the maximum spectral radius in the set $\mathcal{D}(e)$.
  \end{corollary}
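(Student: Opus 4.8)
The corollary should come out of Propositions~\ref{T1} and~\ref{T2} together with the arc-swapping argument used to prove Proposition~\ref{T2}. For the value: since $t\neq 1$, Proposition~\ref{T2} gives $\rho(e)=\rho^*(e)=\rho^{**}(e)$, and since $D^\#\in\mathcal{D^{**}}(e)$ is by hypothesis a digraph of maximum spectral radius in $\mathcal{D^{**}}(e)$, we get $\rho(D^\#)=\rho^{**}(e)=\rho(e)$; thus $D^\#$ is already an extremal digraph in $\mathcal{D}(e)$, and only the uniqueness (up to isolated vertices) remains.

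Let $D\in\mathcal{D}(e)$ have no isolated vertices and satisfy $\rho(D)=\rho(e)$. By Proposition~\ref{T1}, $D$ is strongly connected, i.e.\ $D\in\mathcal{D^*}(e)$, so the procedure from the proof of Proposition~\ref{T2} applies to $D$ and outputs a digraph $D'\in\mathcal{D^{**}}(e)$ with $\rho(D')=\rho(e)=\rho^{**}(e)$; hence $D'=D^\#$ by hypothesis. It therefore suffices to show that this procedure merely replaces $D$ by an isomorphic copy of itself, so that $D\cong D'=D^\#$.

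To see this, examine one step of the procedure: it replaces an arc $(v_i,v_j)$ by an arc $(v_i,v_l)$ with $l<j$, the vertices being numbered so that a Perron vector $x$ of the current digraph is nonincreasing. By the proof of Proposition~\ref{T2} the resulting digraph is again strongly connected with spectral radius $\rho(e)$ and still has $x$ as a Perron vector, and since $A(\cdot)x$ changes only in the $i$-th coordinate the eigenvalue equation forces $x_l=x_j$, so $v_j$, $v_l$ and all vertices indexed between them lie in one level set of $x$. Thus every step only reshuffles arcs among vertices of equal Perron weight, and $x$ is a common Perron vector of $D$ and $D^\#$. One then computes the level sets of the Perron vector of $D^\#$ --- essentially the $\lfloor t/2\rfloor$ vertices of $\overleftrightarrow{K_k}$ joined to the added vertex $v$ in both directions, the remaining $k-\lfloor t/2\rfloor$ vertices of $\overleftrightarrow{K_k}$, and $v$ itself, with only minor mergers in boundary cases --- and checks that within each such class $D^\#$ is homogeneous except for the single extra arc present when $t$ is odd. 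Consequently any reshuffling of arcs among equal-weight vertices that keeps the digraph strongly connected with spectral radius $\rho(e)$ amounts to a permutation of the vertices inside one level set, i.e.\ to a graph isomorphism, and so $D\cong D^\#$, as claimed.

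The step I expect to be the real obstacle is this last one: the swaps in the proof of Proposition~\ref{T2} genuinely alter the labelled digraph, and what must be excluded is that they alter its isomorphism type when started from an extremal digraph --- equivalently, that passing to the canonical family $\mathcal{D^{**}}(e)$ is injective on isomorphism classes of extremal members of $\mathcal{D}(e)$. The explicit structure of $D^\#$ makes this manageable, since each of its Perron level sets is homogeneous apart from the lone extra arc of the odd case, so reversing a swap applied to $D^\#$ only moves that one arc within its class and therefore returns a copy isomorphic to $D^\#$.
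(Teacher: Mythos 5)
Your proposal follows the paper's own route: reduce to $\mathcal{D^{**}}(e)$ via Propositions~\ref{T1} and~\ref{T2}, observe that the arc-swapping procedure preserves the Perron vector so that an extremal $D$ and $D^\#$ share the same Perron vector, and then use the explicit level-set structure $x_1=\cdots=x_{\lfloor t/2\rfloor}>x_{\lfloor t/2\rfloor+1}=\cdots=x_k>x_{k+1}$ of $D^\#$'s Perron vector to force $D=D^\#$. The paper finishes by reading the out-neighborhoods directly from $A(D)x=\rho(e)x$ rather than by your ``swaps stay within level sets'' phrasing, but this is a cosmetic difference; the argument is essentially the same and correct.
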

\begin{proof}
Let $D$ be any simple digraph with $e$ arcs and no isolated vertices which has the maximum spectral radius in the set $\mathcal{D}(e)$, i.e., $\rho(D)=\rho(e)$. By Proposition~\ref{T1}, $D$ has to be strongly connected. By the proof of Proposition~\ref{T2}, there exists a strongly connected digraph $D_1\in \mathcal{D^{**}}(e)$ such that $\rho(D_1)=\rho(D)$ and $x$ is the Perron vector of $A(D)$ and $A(D_1)$ corresponding to eigenvalue $\rho(e)$. By the condition of Corollary~\ref{EL6}, $D_1=D^{\#}$. Hence $x=(x_1, \cdots, x_n)^T$  is the Perron vector of $A(D^{\#})$ which implies
$x_1=x_2=\cdots=x_{t/2}>x_{1+t/2}=\cdots=x_k>x_{k+1}$. Therefore it follows from $A(D)x=\rho(e)x$ that $v_i(1\le i\le t/2)$ is adjacent to all the other vertices, $v_j(t/2 +1\le j\le k)$ is adjacent to all other vertices except $v_{k+1}$, and $v_{k+1}$ is adjacent to $v_i,~1\le i\le t/2$  for even $t$. Then  $D=D^\#$. If $t$ is odd number, by the same method, it is easy to see that $D=D^\#$.  So the assertion holds.
\end{proof}

%{\bf Remark} $2$: From the above proof, we know that $D'$ is obtained from $D$ and they have the same number of vertices and the same Perron vector. \\
%{\bf Remark} $3$: Let $e=k(k-1)+t,~0\le t\le 2k-1,$  by the definition of $ \rho(e),~\rho^*(e),~\rho^{**}(e)$, we know that
%\begin{equation}\label{three}
%\rho(e)\ge \rho^*(e)\ge\rho^{**}(e).
%\end{equation}
%If $t\neq 1$, by the above proposition, we know that $\rho(e)=\rho^*(e)=\rho^{**}(e)$. It is nature to ask if $t=1$, whether the equalities in $(\ref{three})$ hold? In section $4$, we will prove that $\rho(e)>\rho^*(e)$. For the second inequality, we can not give explicit relation.
\section{Proof of the theorem~\ref{mainth1}}

In order to present the proof of Theorem~\ref{mainth1}, we begin to give several upper bounds for the spectral radius of digraphs in the set $\mathcal{D}^{**}(e)$, which is interesting in its own right.

\begin{lemma}\label{Lu}
Let $e=2{k\choose 2}+t$ with $0\le t\le 2k-1$ and $t\neq 1.$ If $D\in \mathcal{D}^{**}(e)$, then $\rho(D)\le \frac{w-1+\sqrt{(w-1)^2+2(e-w(w-1))}}{2}\le \frac{k-1+\sqrt{(k-1)^2+2t}}{2}, $ where $w$ is the clique number of $D$.
\end{lemma}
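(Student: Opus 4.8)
The plan is to obtain the first inequality from Friedland's norm bound (Theorem~\ref{Fr1}) applied to a natural additive splitting of $A(D)$, and then to pass to the second inequality by a short monotonicity argument in the clique number $w=w(D)$. Since $D\in\mathcal{D}^{**}(e)$, Proposition~\ref{PL1} lets me write
$$A(D)=\left(\begin{matrix}J_w-I_w&A_{12}\cr A_{21}&0\end{matrix}\right)=A+B,\qquad A=\left(\begin{matrix}0&A_{12}\cr A_{21}&0\end{matrix}\right),\quad B=\left(\begin{matrix}J_w-I_w&0\cr 0&0\end{matrix}\right).$$
Theorem~\ref{Fr1} then gives $\rho(D)=\rho(A+B)\le\frac{\nu(J_w-I_w)+\sqrt{\nu(J_w-I_w)^2+4\nu(A_{12}A_{21})}}{2}$, and since $J_w-I_w$ is symmetric with eigenvalues $w-1$ (once) and $-1$ (with multiplicity $w-1$), its spectral norm is $\nu(J_w-I_w)=w-1$ (this also covers $w=1$). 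So everything reduces to estimating $\nu(A_{12}A_{21})$.

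For that estimate I would use submultiplicativity of $\nu(\cdot)$ to get $\nu(A_{12}A_{21})\le\nu(A_{12})\nu(A_{21})$, and then the trace bound $\nu(M)^2\le\mathrm{tr}(MM^T)$: since $A_{12},A_{21}$ are $0$--$1$ matrices, $\nu(A_{12})^2\le a$ and $\nu(A_{21})^2\le b$, where $a$ and $b$ are the numbers of $1$-entries of $A_{12}$ and $A_{21}$, i.e.\ the numbers of arcs leaving, respectively entering, the maximum clique. By AM--GM, $\nu(A_{12})\nu(A_{21})\le\sqrt{ab}\le\frac{1}{2}(a+b)$, and because the $1$-entries of $A(D)$ outside the $(J_w-I_w)$-block are exactly those of $A_{12}$ and $A_{21}$, one has $a+b=e-w(w-1)$. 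Hence $4\nu(A_{12}A_{21})\le 2\bigl(e-w(w-1)\bigr)$, which substituted above gives the first inequality $\rho(D)\le\frac{w-1+\sqrt{(w-1)^2+2(e-w(w-1))}}{2}$.

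For the second inequality, note first that $w\le k$: otherwise $\overleftrightarrow{K_{k+1}}\subseteq D$, forcing $e\ge k(k+1)$, which contradicts $e=k(k-1)+t\le k(k-1)+2k-1<k(k+1)$. Simplifying the radicand, $(w-1)^2+2\bigl(e-w(w-1)\bigr)=2e+1-w^2$, so put $g(w)=(w-1)+\sqrt{2e+1-w^2}$ (the radicand being nonnegative since $w^2\le k^2\le 2k(k-1)+1\le 2e+1$). A routine squaring shows that for an integer $w$ with $1\le w\le k-1$ one has $g(w+1)\ge g(w)$ if and only if $e\ge w(w+1)$; and $w(w+1)\le(k-1)k=k(k-1)\le e$. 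Therefore $g(1)\le g(2)\le\cdots\le g(k)$, so $g(w)\le g(k)=(k-1)+\sqrt{2e+1-k^2}=(k-1)+\sqrt{(k-1)^2+2t}$, using $e=k(k-1)+t$; this is exactly the claimed bound.

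The only genuinely substantive point is the estimate $\nu(A_{12}A_{21})\le\frac{1}{2}\bigl(e-w(w-1)\bigr)$: everything else is bookkeeping once the decomposition $A(D)=A+B$ and the value $\nu(J_w-I_w)=w-1$ are in hand. The AM--GM step is the useful idea, converting a product of two arc-counts into a single linear expression in $e$; I would only take care to confirm that this crude trace bound suffices here, with the finer structural information in Proposition~\ref{PL1} (the zero row or column of $A_{12}$ or $A_{21}$) left for the sharper estimates needed later.
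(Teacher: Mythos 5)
Your proposal is correct and follows essentially the same route as the paper: decompose $A(D)$ via Proposition~\ref{PL1}, apply Theorem~\ref{Fr1}, bound $\nu(A_{12}A_{21})$ by $\nu(A_{12})\nu(A_{21})\le\sqrt{ab}\le\tfrac12(a+b)=\tfrac12(e-w(w-1))$ using the trace bound and AM--GM, and then conclude by monotonicity in $w$. The only (harmless) differences are that you verify monotonicity of $g(w)$ by a discrete comparison where the paper differentiates $f(w)$, and that you make the implicit bound $w\le k$ explicit.
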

\begin{proof}
Since $D\in \mathcal{D}^{**}(e)$,  we assume that $A(D)$ has the form $(\ref{Equation3})$ by Proposition~\ref{PL1}. Hence by Theorem~\ref{Fr1},\\
\begin{eqnarray*}
\rho(D)&\le& \frac{\rho(J_w-I_w)+\sqrt{\rho(J_w-I_w)^2+4\nu(A_{12}A_{21})}}{2}\\
&\le&\frac{w-1+\sqrt{(w-1)^2+4\nu(A_{12})\nu(A_{21})}}{2}\\
&\le&\frac{w-1+\sqrt{(w-1)^2+4\sqrt{|E(A_{12})||E(A_{21})|}}}{2}\\
&\le&\frac{w-1+\sqrt{(w-1)^2+2(|E(A_{12})|+|E(A_{21})|)}}{2}\\
&=&\frac{w-1+\sqrt{(w-1)^2+2(e-w(w-1))}}{2}.
\end{eqnarray*}
Let $2f(w)=w-1+\sqrt{(w-1)^2+2(e-w(w-1))}$. Then $2f'(w)=1-\frac{w}{\sqrt{(w-1)^2+2(e-w(w-1))}}>0$ for $1\le w\le k-1$.
 On the other hand,  $f(k-1)\le f(k)=\frac{k-1+\sqrt{(k-1)^2+2t}}{2}$. This completes the proof of this lemma.
\end{proof}

In particular, for $D\in {\mathcal{D}}^{**}(e)$  and $w(D)=k$, we characterize all extremal digraphs with the maximum spectral radius.

\begin{lemma}\label{Lw}
Let $e=2{k\choose 2}+t$ with $0\le t\le 2k-1$. If  $D\in \mathcal{D}^{**}(e)$ and  $w(D)=k>2$, then $\rho(D)\le \rho(D^\#)$ with equality  if and only if $D=D^\#$.
\end{lemma}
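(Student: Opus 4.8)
The plan is to apply Corollary~\ref{FrC} to both $D$ and $D^{\#}$ and to reduce the inequality $\rho(D)\le\rho(D^{\#})$ to a termwise comparison of power-series coefficients. By Proposition~\ref{PL1}, $A(D)$ has the block form~(\ref{Equation3}) with $w=k$, and $A(D^{\#})$ has the same form with $A_{12}=\alpha_{\lfloor t/2\rfloor}$, $A_{21}=\alpha_{\lceil t/2\rceil}^T$ (up to transposition). Putting $c_i(D)=\mathbf{1}^T(A_{12}A_{21})^i\mathbf{1}$, Corollary~\ref{FrC} identifies $\rho(D)$ as the unique positive root of $f_D(r):=\sum_{i\ge0}c_i(D)\,r^{-i}(r+1)^{-(i+1)}=1$, and likewise for $D^{\#}$; a short computation with the rank-one blocks of $A(D^{\#})$ gives $c_0(D^{\#})=k$ and $c_i(D^{\#})=\lfloor t/2\rfloor^{i}\lceil t/2\rceil$ for $i\ge1$, so that $f_{D^{\#}}(r)=1$ is exactly the cubic displayed just before Theorem~\ref{mainth1}. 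Both $f_{D^{\#}}$ and, in view of the estimate below, $f_D$ converge and are strictly decreasing on $\{r>0:\ r(r+1)>\lfloor t/2\rfloor\}$, a set containing $\rho(D)$ and $\rho(D^{\#})$ since each is at least $\rho(\overleftrightarrow{K_k})=k-1$ and $(k-1)k>\lfloor t/2\rfloor$. Hence it suffices to prove $c_i(D)\le c_i(D^{\#})$ for all $i\ge0$, and for the equality statement I will track when all of these inequalities are tight. (One may assume $t\ge2$: for $t=0$ clearly $D=\overleftrightarrow{K_k}=D^{\#}$, and for $t=1$ there is no $D\in\mathcal{D}^{**}(e)$ with $w(D)=k$.)

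To estimate $c_i(D)$, let $u_1,\ldots,u_m$ be the vertices of $D$ outside the clique $\{v_1,\ldots,v_k\}$; by Proposition~\ref{PL1} they form an independent set whose neighbours all lie in the clique. Write $a_j$ and $b_j$ for the in-degree and out-degree of $u_j$: strong connectedness forces $a_j,b_j\ge1$, and counting arcs between the clique and the outside gives $\sum_j(a_j+b_j)=e-k(k-1)=t$. From $(A_{12}A_{21})^i=A_{12}(A_{21}A_{12})^{i-1}A_{21}$, expanding $N:=A_{21}A_{12}$ yields, for $i\ge1$,
$$c_i(D)=\sum_{j_1,\ldots,j_i}a_{j_1}\Bigl(\prod_{l=1}^{i-1}N_{j_l,j_{l+1}}\Bigr)b_{j_i},\qquad N_{j,l}=|N^{+}(u_j)\cap N^{-}(u_l)|\le\min(b_j,a_l),$$
where $N^{-}(u_l)$ is the in-neighbourhood of $u_l$. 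Bounding each factor $N_{j_l,j_{l+1}}$ by $b_{j_l}$, and alternatively by $a_{j_{l+1}}$, and carrying out the resulting product sums, one gets $c_i(D)\le P\cdot\min(A,B)^{i-1}$ for $i\ge1$, where $P=\sum_j a_jb_j$ and $A=\sum_j a_j$, $B=\sum_j b_j$ with $A+B=t$. Now $P\le\sum_j(a_j+b_j)^2/4\le t^2/4$, so $P\le\lfloor t/2\rfloor\lceil t/2\rceil$ since $P$ is an integer, and plainly $\min(A,B)\le\lfloor t/2\rfloor$; therefore $c_i(D)\le\lfloor t/2\rfloor\lceil t/2\rceil\,\lfloor t/2\rfloor^{i-1}=c_i(D^{\#})$ for $i\ge1$, while $c_0(D)=k=c_0(D^{\#})$. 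This gives $\rho(D)\le\rho(D^{\#})$.

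For the equality case, if $\rho(D)=\rho(D^{\#})$ then $f_D$ and $f_{D^{\#}}$ agree at that value, which together with $c_i(D)\le c_i(D^{\#})$ forces $c_i(D)=c_i(D^{\#})$ for every $i$. Equality at $i=1$ reads $\sum_j a_jb_j=\lfloor t/2\rfloor\lceil t/2\rceil$, which, retracing the bounds above and using $a_j,b_j\ge1$, is possible only when $m=1$ and $\{a_1,b_1\}=\{\lfloor t/2\rfloor,\lceil t/2\rceil\}$. With $m=1$ the blocks $A_{12}$ and $A_{21}$ are the indicator vectors of $X:=N^{-}(u_1)$ and $Y:=N^{+}(u_1)$, whence $c_i(D)=|X|\,|Y|\,|X\cap Y|^{i-1}$; equality at $i=2$ then forces $|X\cap Y|=\min(|X|,|Y|)=\lfloor t/2\rfloor$, i.e. the smaller of $X,Y$ is contained in the larger. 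Finally, condition (ii) in the definition of $\mathcal{D}^{**}(e)$ forces $X=\{v_1,\ldots,v_{|X|}\}$; combining this with the nesting of $X$ and $Y$, with $|X|+|Y|=t$, and with the normalization $v_k\notin Y$, one checks that $D$ arises from $\overleftrightarrow{K_k}$ by adjoining a vertex joined in both directions to $\lfloor t/2\rfloor$ clique vertices and, when $t$ is odd, to one more clique vertex by a single arc; that is, $D=D^{\#}$.

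The step I expect to be the main obstacle is the coefficient estimate of the second paragraph. The spectral-norm bound used for Lemma~\ref{Lu} only gives $c_i(D)\le k(t/2)^i$, which exceeds $c_i(D^{\#})\approx(t/2)^{i+1}$ precisely because $t<2k$, so it does not suffice; one really needs the exact walk-counting identity for $c_i(D)$ together with the observation that one of $A,B$ is at most $\lfloor t/2\rfloor$. Evaluating $c_i(D^{\#})$ and unwinding the $\mathcal{D}^{**}(e)$ conditions in the equality discussion should then be routine.
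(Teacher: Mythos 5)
Your proof is correct, and it diverges from the paper's in one substantive way. The paper first reduces to the case $|V(D)|=k+1$: when there are at least two vertices outside the clique it builds a competitor digraph $D_1$ on $k+1$ vertices and shows $\rho(D_1)>\rho(D)$ by pairing the right Perron vector of $A(D)$ with the left Perron vector of the competitor and checking $x^{T}(B-A(D))y>0$; only then does it invoke Corollary~\ref{FrC}, where $A_{12},A_{21}$ are single columns/rows and the series collapses to $\frac{k}{r+1}+pq\sum_{i\ge1}\frac{(p\wedge q)^{i-1}}{r^i(r+1)^{i+1}}=1$ with $p+q=t$, so the comparison with $D^{\#}$ is immediate. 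You instead apply Corollary~\ref{FrC} to the general block form with $m$ outside vertices and prove the termwise bound $\mathbf{1}^{T}(A_{12}A_{21})^{i}\mathbf{1}\le\bigl(\sum_j a_jb_j\bigr)\min(A,B)^{i-1}\le\lfloor t/2\rfloor^{i}\lceil t/2\rceil$ directly, using the walk-counting identity and $|N^{+}(u_j)\cap N^{-}(u_l)|\le\min(b_j,a_l)$; the reduction to $m=1$ then falls out of the equality analysis at $i=1$ (where $\sum_j a_jb_j=\lfloor t/2\rfloor\lceil t/2\rceil$ with all $a_j,b_j\ge1$ forces a single outside vertex). This buys you a single uniform argument in place of the paper's two-stage one and avoids the Perron-vector perturbation entirely, at the cost of a slightly more delicate coefficient estimate; your closing remark correctly identifies why the crude norm bound $c_i\le k(t/2)^i$ from Lemma~\ref{Lu} would not suffice here. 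I verified the key steps (the identity for $c_i(D)$, the two alternative bounds giving $P\cdot A^{i-1}$ and $P\cdot B^{i-1}$, the integrality argument $P\le\lfloor t^2/4\rfloor$, and the recovery of $D^{\#}$ from conditions (i)--(ii) of $\mathcal{D}^{**}(e)$ once $m=1$ and $|X\cap Y|=\lfloor t/2\rfloor$ are known); they all hold, so the argument is complete.
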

\begin{proof}
Since $D\in \mathcal{D}^{**}(e)$, we assume that $A(D)$ has the following form.
 $$A(D)=\left(\begin{matrix}J_k-I_k&A_{12}&A_{13}\cr A_{21}&0&0\cr A_{31}&0&0\end{matrix}\right)_{n\times n},$$
where $A_{12}, A_{21}^T$ are $k\times 1$ matrices. Moreover, $y=(y_1, \cdots, y_n)^T$ is an eigenvector of $A(D)$ corresponding to $\rho(D)$  with $y_1\ge y_2\ge \cdots\ge y_n>0$.  If $n=|V(D)|>k+1$, then
 denote by  $|E(A_{ij})|$  the number of $1$ in $A_{ij}$. It is easy to see that $p=:|E(A_{12})|+|E(A_{13})|\ge t/2$ or $q=:|E(A_{21})|+|E(A_{31})\ge t/2$, say $q\ge t/2$. Clearly, $t-\min\{q,k-1\}\le k-1$.  Let
   $$B=\left(\begin{matrix}J_k-I_k& B_{12}&0\\ B_{21}&0&0\\ 0&0&0\end{matrix}\right)_{n\times n},$$
  where $B_{12}$ is a $k-$dimensional column vector whose first $t-\min\{q,k-1\}$ components are 1, and 0 otherwise, $B_{21}$ is a $k-$dimensional row vector whose first $l=\min\{q, k\}$ components are 1, and 0 otherwise. Moreover, let $D_1$ be a simple digraph whose adjacency matrix is the $(k+1)\times (k+1)$ principal submatrix of $B$. Then $\rho(D_1)=\rho(B)$ and $D_1\in {\mathcal{D}}(e)$.  Let $x=(x_1, \cdots, x_{k+1}, 0,\cdots, 0)^T$ be the positive eigenvector of $B^T$ corresponding to $\rho(B)$. Then $x_1=\cdots=x_q\ge x_{q+1}\ge \cdots\ge x_{k+1}>0$. If $B_{21}-A_{21}\neq 0 $, then $x_{k+1}(B_{21}-A_{21})(y_1, \cdots, y_k)^T>0$. Further, since the last $k-p$ rows of $(A_{12}, A_{13})$ are zero,  we have
  $(x_1, \cdots, x_k)[(B_{12}-A_{12})y_{k+1}-A_{13}(y_{k+2}, \cdots, y_n)^T]
  \ge (x_1, \cdots, x_k)[(B_{12}-A_{12})y_{k+1}-A_{13}(1, \cdots, 1)^T y_{k+2}]\ge x_p(y_{k+1}-y_{k+2})|E(A_{13})|\ge 0$. Therefore,
\begin{eqnarray*}
&&(\rho(D_1)-\rho(D))x^Ty=x^T(B-A(D))y\\
&=&x^T\left(\begin{matrix}0& B_{12}-A_{12}&-A_{13}\cr B_{21}-A_{21}& 0&0\cr -A_{31}&0&0\end{matrix}\right)y\\
&=&(x_1, \cdots, x_k)[(B_{12}-A_{12})y_{k+1}-A_{13}\left(\begin{array}{c}
y_{k+2}\\
 \cdots \\
 y_n\end{array}\right)]
+x_{k+1}(B_{21}-A_{21})\left(\begin{array}{c}
y_{1}\\y_{2}\\
 \cdots\\ y_k\end{array}\right)\\
&>&0.
\end{eqnarray*}
Then $\rho(D_1)>\rho(D)$. If $B_{21}-A_{21}= 0 $, then we have
  $$(x_1, \cdots, x_k)[(B_{12}-A_{12})y_{k+1}-A_{13}(y_{k+2}, \cdots, y_n)^T]> x_p(y_{k+1}-y_{k+2})|E(A_{13})|\ge 0,$$ since $A_{31}\neq 0$ and $|E(B_{12})|+|E(B_{21})|=t$. Thus we also get $\rho(D_1)>\rho(D)$. Hence we may assume that $|V(D)|=k+1$ and
  $$A(D)=\left(\begin{array}{cc}J_k-I_k & A_{12}\\ A_{21}& 0\end{array}\right)_{(k+1)\times (k+1)},$$
 where the first $p$ components of $A_{12}$ are 1 and the first $q$ components of $A_{21}$ are 1  with $  p+q=t$ and $p\le q.$  By Corollary~\ref{FrC}, $\rho(D)$ is the unique positive solution of $$\sum_{i=0}^{\infty}\frac{\mathbf{1}^T(A_{12}A_{21})^i\mathbf{1}}{r^i(r+1)^{i+1}}=1,$$
 i.e.,
 $$\frac{k}{r+1}+pq\sum_{i=1}^{\infty}\frac{(p\wedge q)^{i-1}}{r^i(r+1)^{i+1}}=1,$$
where $p\wedge q=\min\{p,q\}$.
On the other hand, $\rho(D^{\#})$ is the unique positive solution of
$$\frac{k}{r+1}+\lfloor\frac{t}{2}\rfloor\lceil\frac{t}{2}\rceil\sum_{i=1}^{\infty}
\frac{(\lfloor\frac{t}{2}\rfloor)^{i-1}}{r^i(r+1)^{i+1}}=1.$$
Hence $\rho(D)\le \rho(D^{\#})$ with equality if and only if $D=D^{\#}$.
 This completes the proof.
\end{proof}

\begin{lemma}\label{LM}
Let $e=2{k\choose 2}+t$ with $0\le t\le 2k-1$. If  $D\in \mathcal{D}^{**}(e)$ with
$$\label{E3}
A(D)=\left(\begin{matrix}J_w-I_w&A_{12}\cr A_{21}&0\end{matrix}\right).$$
Then
$$||A_{12}^T\mathbf{1}||^2+||A_{21}\mathbf{1}||^2\le pw^2+(|E(A_{12})|+|E(A_{21})|-pw)^2 , $$
$$\mathbf{1}^T(A_{12}A_{21})\mathbf{1}\le p'(w-1)w +\left\lfloor\frac{|E(A_{12})|+|E(A_{21})|-p'(2w-1)}{2}\right\rfloor\left\lceil\frac
{|E(A_{12})|+|E(A_{21})|-p'(2w-1)}{2}\right\rceil,$$
where $p=\lfloor\frac{|E(A_{12})|+|E(A_{21})|}{w}\rfloor$ and $p'=\lfloor\frac{|E(A_{12})|+|E(A_{21})|}{2w-1}\rfloor$.
\end{lemma}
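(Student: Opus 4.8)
The plan is to reinterpret both quantities via the in- and out-degree sequences of the vertices outside the clique and then reduce to an elementary extremal problem on bounded integer sequences, solved by smoothing. Here $A_{12}$ is $w\times(n-w)$, $A_{21}$ is $(n-w)\times w$, and by Proposition~\ref{PL1} (with the agreed normalization) the last column of $A_{21}$ vanishes. For $l=1,\dots,n-w$ let $r_l$ be the $l$-th column sum of $A_{12}$ (the number of arcs from the clique into the $l$-th outside vertex) and $s_l$ the $l$-th row sum of $A_{21}$ (the number of arcs from that vertex back into the clique); then $0\le r_l\le w$, $0\le s_l\le w-1$, $\sum_l r_l=|E(A_{12})|$, $\sum_l s_l=|E(A_{21})|$, and a direct computation gives $\|A_{12}^T\mathbf{1}\|^2=\sum_l r_l^2$, $\|A_{21}\mathbf{1}\|^2=\sum_l s_l^2$ and $\mathbf{1}^T(A_{12}A_{21})\mathbf{1}=\sum_l r_l s_l$. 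Writing $S=|E(A_{12})|+|E(A_{21})|$, both inequalities then become upper bounds valid for \emph{every} integer tuple obeying these box and sum constraints, so the underlying digraph may be discarded at this point.

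For the first inequality I would regard the $r_l$'s and $s_l$'s together as a single list $c_1,\dots,c_{2(n-w)}\in[0,w]$ with $\sum_i c_i=S$ and maximize $\sum_i c_i^2$. If two coordinates $c_i\ge c_j$ both lie strictly between $0$ and $w$, replacing $(c_i,c_j)$ by $(c_i+1,c_j-1)$ respects all constraints and raises $\sum_i c_i^2$ by $2(c_i-c_j)+2\ge 2$; since $\sum_i c_i^2$ is a bounded nonnegative integer, iterating terminates, necessarily at a tuple with at most one coordinate in $(0,w)$ — that is, $p=\lfloor S/w\rfloor$ coordinates equal to $w$, one equal to $S-pw$, and the rest $0$ (admissible because $S\le 2(n-w)w$ forces $p\le 2(n-w)$). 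Its value $pw^2+(S-pw)^2$ is the claimed bound.

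For the second inequality I would first note that for each $l$, since $r_l,s_l$ are nonnegative integers with $r_l+s_l=:u_l$, one has $r_l s_l\le\lfloor u_l/2\rfloor\lceil u_l/2\rceil=:g(u_l)$, where $0\le u_l\le 2w-1$ (this is precisely where $s_l\le w-1$ is used) and $\sum_l u_l=S$. Hence $\mathbf{1}^T(A_{12}A_{21})\mathbf{1}\le\sum_l g(u_l)$, and it suffices to maximize $\sum_l g(u_l)$ over such tuples. Since the successive differences $g(u+1)-g(u)=\lceil u/2\rceil$ are nondecreasing, $g(u)=\lfloor u^2/4\rfloor$ is "integer-convex", so moving a unit of mass from a smaller $u_j$ to a larger $u_i$ (both in $(0,2w-1)$) cannot decrease $\sum_l g(u_l)$; the same termination argument pushes the maximizer to have $p'=\lfloor S/(2w-1)\rfloor$ entries equal to $2w-1$, one equal to $S-p'(2w-1)$, and the rest $0$. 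Using $g(2w-1)=(w-1)w$ gives the value $p'(w-1)w+\lfloor\tfrac{S-p'(2w-1)}{2}\rfloor\lceil\tfrac{S-p'(2w-1)}{2}\rceil$, which is the assertion.

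The routine-but-delicate part is the two smoothing arguments, and in particular the floor/ceiling bookkeeping: checking that $g$ has nondecreasing increments, identifying the extremal configurations as described, and justifying the box bound $2w-1$ in the second part, for which the normalization ``the last column of $A_{21}$ is zero'' (equivalently $s_l\le w-1$) is exactly what is needed. I do not anticipate any genuine obstacle beyond this elementary extremal reasoning.
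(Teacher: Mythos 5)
Your proof is correct, and while the first inequality is handled essentially as in the paper, your treatment of the second one takes a genuinely different route. For the first bound, your explicit smoothing of the combined list of column sums of $A_{12}$ and row sums of $A_{21}$ under the box constraint $c_i\le w$ is just a hands-on version of what the paper does: the authors observe that this vector is majorized by $\alpha=(w,\dots,w,S-pw,0,\dots,0)$ and invoke Schur-convexity of the sum of squares (Lemma 9 of Friedland), which is the same extremal configuration you reach by local moves. For the second bound the approaches diverge. The paper majorizes the two sequences $\mathbf{1}^TA_{12}$ and $\mathbf{1}^TA_{21}^T$ \emph{separately} by staircase vectors $\beta$ (steps of height $w$) and $\gamma$ (steps of height $w-1$), bounds $\mathbf{1}^TA_{12}A_{21}\mathbf{1}\le\beta^T\gamma$ via the same rearrangement/majorization lemma, and then needs a two-case analysis ($s_1<s_2$ versus $s_1\ge s_2$) to compare $\beta^T\gamma$ with $p'w(w-1)+\lfloor t_3/2\rfloor\lceil t_3/2\rceil$. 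You instead apply the pointwise bound $r_ls_l\le\lfloor u_l/2\rfloor\lceil u_l/2\rceil$ with $u_l=r_l+s_l\le 2w-1$ (correctly locating where the ``last column of $A_{21}$ is zero'' normalization enters) and then run a single discrete-convexity argument on the $u_l$'s. This collapses the paper's case analysis and avoids the inner-product majorization lemma altogether, at the cost of being a slightly coarser intermediate bound; both yield exactly the stated right-hand side. The only detail worth tightening is termination of your second smoothing: a move can leave $\sum_l g(u_l)$ unchanged, so you should track a strictly increasing secondary potential such as $\sum_l u_l^2$ (or simply pick, among all maximizers of $\sum_l g(u_l)$, one maximizing $\sum_l u_l^2$) to conclude that an optimum has at most one coordinate strictly between $0$ and $2w-1$. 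This is cosmetic and does not affect correctness.
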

\begin{proof}
Let $\alpha=(\underbrace{w,\cdots,w}_{p},|E(A_{12})|+|E(A_{21})|-pw,0,\cdots,0)^T$, then
$(\mathbf{1}^TA_{12},\mathbf{1}^TA_{21}^T)^T$ is majorized by $\alpha$. For majorization, the readers may  see \cite{MaO}. By Lemma 9 in \cite{Fr}, we have
$$||A_{12}^T\mathbf{1}||^2+||A_{21}\mathbf{1}||^2=||(\mathbf{1}^TA_{12},\mathbf{1}^TA_{21}^T)^T||^2
=(\mathbf{1}^TA_{12},\mathbf{1}^TA_{21}^T)\dot(\mathbf{1}^TA_{12},\mathbf{1}^TA_{21}^T)^T\le ||\alpha||^2.$$
Let $$|E(A_{12})|=s_1w+t_1~ \mbox{with $0\le t_1\le w-1$},$$
$$|E(A_{21})|=s_2(w-1)+t_2  ~\mbox{with $0\le t_2\le w-2$,}$$
$$|E(A_{12})|+|E(A_{21})|=p'(2w-1)+t_3~ \mbox{with $0\le t_3\le 2w-2$}.$$
Moreover, let   $\beta=(\underbrace{w,\cdots,w}_{s_1},t_1,0,\cdots,0)^T$ and $\gamma=(\underbrace{w-1,\cdots,w-1}_{s_2},t_2,0,\cdots,0)^T$. Then $\mathbf{1}^TA_{12},~\mathbf{1}^TA_{21}^T$ are majorized by $\beta,~ \gamma$, respectively. By Lemma 9 in \cite{Fr}, we have
$\mathbf{1}^T(A_{12}A_{21})\mathbf{1}\le \beta^T\gamma$.

If $s_1<s_2$, then $s_1w+t_1+s_2(w-1)+t_2\ge s_1(2w-1)$, which implies $p'\ge s_1$ and
$t_1+(w-1)\le s_1w+t_1+s_2(w-1)+t_2-p'(2w-1)\le t_3$ for $p'=s_1$.
Hence $$\beta^T\gamma\le \left\{\begin{array}{ccc}
 s_1w(w-1)+t_1(w-1)&<& p'w(w-1)+\lfloor\frac{t_3}{2}\rfloor\lceil\frac{t_3}{2}\rceil, \ \mbox {for }\ s_1<p'\\
 pw(w-1)+t_1(w-1)&\le& p'w(w-1)+\lfloor\frac{t_3}{2}\rfloor\lceil\frac{t_3}{2}\rceil, \ \mbox {for }\ s_1=p'\end{array}\right..$$

If $s_1\ge s_2$, by the same method, it is easy to prove that
$$\beta^T\gamma\le s_2w(w-1)+wt_2\le p' w(w-1)+\lfloor\frac{t_3}{2}\rfloor\lceil\frac{t_3}{2}\rceil.$$
This completes the proof of the lemma.
\end{proof}

\begin{lemma}\label{TL}
Let $e=k(k-1)+t$ with $2\le t\le 2k-1$. If $k>4t^4+4$ and $\rho(D)=\rho^{**}(e)$ for $D\in \mathcal{D^{**}}(e)$, then $D=D^\#$.
\end{lemma}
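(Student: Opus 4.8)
The plan is to show that any $D\in\mathcal{D}^{**}(e)$ with $\rho(D)=\rho^{**}(e)$ has clique number exactly $k$, whereupon Lemma~\ref{Lw} (applicable since $k>4t^4+4>2$) gives $D=D^{\#}$. First I record three elementary facts. Since $D^{\#}\in\mathcal{D}^{**}(e)$ we have $\rho(D)\ge\rho(D^{\#})$; evaluating the cubic $\lambda^3-(k-2)\lambda^2-(k+\lfloor t/2\rfloor-1)\lambda+\lfloor t/2\rfloor(k-\lceil t/2\rceil-1)$ (whose largest root is $\rho(D^{\#})$) at $\lambda=k-1$ and at $\lambda=k$ shows $k-1<\rho(D^{\#})<k$ (here $t\ge2$ gives $\lfloor t/2\rfloor\ge1$). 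Finally $w(D)\le k$, since otherwise $D\supseteq\overleftrightarrow{K_{k+1}}$, which already has $(k+1)k>e$ arcs. Write $w:=w(D)=k-d$, $d\ge0$.

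To bound $d$, apply Lemma~\ref{Lu}: $\rho(D)\le\tfrac12\bigl(w-1+\sqrt{(w-1)^2+2(e-w(w-1))}\bigr)$. Since $e-w(w-1)=2dk-d(d+1)+t$, substituting $w=k-d$ and requiring this bound to exceed $k-1$ (forced by $\rho(D)\ge\rho(D^{\#})>k-1$) yields, after squaring, $d(d-1)<t$; hence $d$ is small, $d^2<4t<k$. If $d=0$ then $w(D)=k$ and Lemma~\ref{Lw} gives $\rho(D)\le\rho(D^{\#})$ with equality iff $D=D^{\#}$, which together with $\rho(D)\ge\rho(D^{\#})$ forces $D=D^{\#}$, as desired. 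So assume henceforth $d\ge1$; I will derive a contradiction by proving $\rho(D)<\rho(D^{\#})$.

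Write $A(D)=\left(\begin{matrix}J_w-I_w&A_{12}\\ A_{21}&0\end{matrix}\right)$ as in Proposition~\ref{PL1}. By Corollary~\ref{FrC}, $\rho(D)$ is the unique positive root of $F_D(r):=\sum_{i\ge0}\frac{\mathbf{1}^T(A_{12}A_{21})^i\mathbf{1}}{r^i(r+1)^{i+1}}=1$, and $F_D$ is strictly decreasing; similarly $\rho^{\#}:=\rho(D^{\#})$ is the root of $F_{D^{\#}}(r)=1$, with $\mathbf{1}^T(A_{12}A_{21})^i\mathbf{1}$ replaced by $\lfloor t/2\rfloor^i\lceil t/2\rceil$. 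Hence it suffices to show $F_D(\rho^{\#})<1=F_{D^{\#}}(\rho^{\#})$, that is,
$$\frac{w-k}{\rho^{\#}+1}+\sum_{i\ge1}\frac{\mathbf{1}^T(A_{12}A_{21})^i\mathbf{1}-\lfloor t/2\rfloor^i\lceil t/2\rceil}{(\rho^{\#})^i(\rho^{\#}+1)^{i+1}}<0.$$
For the $i=1$ term I would bound $\mathbf{1}^T A_{12}A_{21}\mathbf{1}$ using Lemma~\ref{LM}: one checks the relevant parameter is $p'=d$ (because $e-w(w-1)-d(2w-1)=d^2+t<2w-1$), giving $\mathbf{1}^T A_{12}A_{21}\mathbf{1}\le d(k-d)(k-d-1)+\lfloor\tfrac{d^2+t}{2}\rfloor\lceil\tfrac{d^2+t}{2}\rceil$. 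For $i\ge2$ I would write $\mathbf{1}^T(A_{12}A_{21})^i\mathbf{1}=(A_{12}^T\mathbf{1})^T(A_{21}A_{12})^{i-1}(A_{21}\mathbf{1})$, apply Cauchy--Schwarz, the first inequality of Lemma~\ref{LM} to bound $\|A_{12}^T\mathbf{1}\|\,\|A_{21}\mathbf{1}\|$, and submultiplicativity of $\nu(\cdot)$ with $\nu(A_{12})\nu(A_{21})\le\tfrac12(|E(A_{12})|+|E(A_{21})|)$, obtaining $\mathbf{1}^T(A_{12}A_{21})^i\mathbf{1}\le\bigl(d(k-d)^2+\tfrac12(d^2-d+t)^2\bigr)\bigl(\tfrac12(e-w(w-1))\bigr)^{i-1}$, whose sum over $i\ge2$ is a convergent geometric tail.

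The crux — and the step I expect to be the main obstacle — is assembling these estimates: the first-order parts of the $i=0$ and $i=1$ terms cancel, leaving
$$\frac{-d}{\rho^{\#}+1}+\frac{d(k-d)(k-d-1)}{\rho^{\#}(\rho^{\#}+1)^2}=\frac{d}{\rho^{\#}+1}\cdot\frac{(k-d)(k-d-1)-\rho^{\#}(\rho^{\#}+1)}{\rho^{\#}(\rho^{\#}+1)},$$
which, since $k-1<\rho^{\#}<k$ gives $(k-d)(k-d-1)-\rho^{\#}(\rho^{\#}+1)<-d(2k-d-1)$, is negative of order $-2d^2/k^2$. The leftover contributions are all positive but of strictly smaller order: the $\lfloor\tfrac{d^2+t}{2}\rfloor\lceil\tfrac{d^2+t}{2}\rceil$ correction in the $i=1$ term is $O(t^2/k^3)$, and the $i\ge2$ tail is $O(d^2/k^2)$ but with a constant smaller than that of the leading negative term. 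One must track all of these second-order quantities precisely enough to conclude the net is strictly negative, and the hypothesis $k>4t^4+4$ is used exactly here, to make every error bound close simultaneously. This contradicts $\rho(D)=\rho^{**}(e)\ge\rho(D^{\#})$; so $d\ge1$ is impossible, $w(D)=k$, and $D=D^{\#}$.
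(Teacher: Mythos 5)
Your proposal follows essentially the same route as the paper's proof: Lemma~\ref{Lu} forces the clique number up to $w=k-d$ with $d(d-1)<t$, Lemma~\ref{Lw} disposes of the case $d=0$, and for $d\ge 1$ the Friedland series of Corollary~\ref{FrC} is estimated term by term, with Lemma~\ref{LM} controlling the $i=1$ term and submultiplicativity of $\nu(\cdot)$ giving a geometric bound on the $i\ge 2$ tail. The one step you flag as ``the main obstacle'' --- checking that the leading negative contribution of order $-2d^2/k^2$ dominates the $O(t^2/k^3)$ correction and the tail of order $d^2/k^2$ with constant $1<2$ --- is precisely the paper's displayed chain of inequalities culminating in $\rho(D)\le k-1-\tfrac{2s^2}{3(k-1)}$ (their $s$ is your $d$), and your order-of-magnitude accounting matches that computation, so the sketch does close under $k>4t^4+4$; as written, though, this decisive estimate is asserted rather than carried out, and it is where all the real work lies. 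The only substantive difference of route is cosmetic: the paper proves the stronger statement $\rho(D)<k-1$ by locating the root of an explicit majorant $f(r)$ of the series, whereas you evaluate the series at $\rho(D^{\#})$ and compare with $1$, which obliges you to establish the two-sided bound $k-1<\rho(D^{\#})<k$ and to track the cancellation against $D^{\#}$'s own series --- slightly more bookkeeping for no extra payoff.
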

\begin{proof}
Clearly, $\rho(D)=\rho(e)\ge \rho(D^{\#})> k-1$ for $2\le t\le 2k-1$.
 By Proposition~\ref{PL1},  we can assume that
  $$
A(D)=\left(\begin{matrix}J_w-I_w&A_{12}\cr A_{21}&0\end{matrix}\right).
$$
Denote by $$e_1=|E(A_{12})|=s_1w+t_1,~\mbox{ with $0\le t_1\le w-1$ },$$
$$e_2=|E(A_{21})|=s_2(w-1)+t_2 ~\mbox{ with $0\le t_2\le w-2$}.$$
Then $w:=w(D)> k-1-\sqrt{t}$. Otherwise, by Lemma~\ref{Lu}, we have $w\le  k-1-\sqrt{t}$ and $\rho(D)\le \frac{w-1+\sqrt{(w-1)^2+2(e-w(w-1))}}{2}< k-1$.
 Further,
 $$e-w(w-1)=(k-w)(k-w-1)+2w(k-w)+t\le \sqrt{t}(\sqrt{t}+1)+2w(k-w)+t=2w(k-w)+ 2t+\sqrt{t},$$
 $$e-w(w-1)=(k-w)(k-w-1)+2w(k-w)+t\le (\sqrt{t}+1)^2+(2w-1)(k-w)+t=(2w-1)(k-w)+ 2t+2\sqrt{t}+1,$$
 which implies
$ e_1+e_2\le \min\{(2w-1)(k-w)+2t+2\sqrt{t}+1,2w(k-w)+ 2t+\sqrt{t}\}.$
By Lemma~\ref{LM}, $w > k-1-\sqrt{t}$, $k>4t^4+4$ and $ e_1+e_2\le \min\{(2w-1)(k-w)+2t+2\sqrt{t}+1,2w(k-w)+ 2t+\sqrt{t}\},$  we have
\begin{eqnarray}\label{inequality1}
&&\mathbf{1}^T(A_{12}A_{21})\mathbf{1}\le (k-w)w(w-1)+\lfloor\frac{(k-w)^2+t}{2}\rfloor\lceil\frac{(k-w)^2+t}{2}\rceil,
\end{eqnarray}
and
\begin{eqnarray}\label{inequality2}
||A_{12}^T\mathbf{1}||~||A_{21}\mathbf{1}||&\le& \frac{||A_{12}^T\mathbf{1}||^2+||A_{21}\mathbf{1}||^2}{2}\nonumber\\
&\le& (k-w)w^2+\frac{{((k-w)(k-w-1)+t)}^2}{2}\nonumber\\
&\le&(k-w)w^2+3t^2.
\end{eqnarray}
Moreover,
\begin{eqnarray*}
\sum_{i=0}^{\infty}\frac{\mathbf{1}^T(A_{12}A_{21})^i\mathbf{1}}{r^i(r+1)^{i+1}}&\le& \frac{w}{r+1}+\frac{\mathbf{1}^T(A_{12}A_{21})\mathbf{1}}{r(r+1)^2}
+\sum_{i=2}^{\infty}\frac{||A_{12}^T\mathbf{1}||~\nu(A_{12}A_{21})^{i-1}~||A_{21}\mathbf{1}||}{r^i(r+1)^{i+1}}\\
&\le&\frac{w}{r+1}+\frac{\mathbf{1}^T(A_{12}A_{21})\mathbf{1}}{r(r+1)^2}
+\frac{||A_{12}^T\mathbf{1}||~\nu(A_{12}A_{21})~||A_{21}\mathbf{1}||}{r(r+1)^{2}(r(r+1)-\nu(A_{12}A_{21}))}\\
&\le&\frac{w}{r+1}+\frac{\mathbf{1}^T(A_{12}A_{21})\mathbf{1}}{r(r+1)^2}
+\frac{||A_{12}^T\mathbf{1}||~\nu(A_{12})\nu(A_{21})~||A_{21}\mathbf{1}||}{r(r+1)^{2}(r(r+1)-\nu(A_{12})\nu(A_{21}))}\\
&\le&\frac{w}{r+1}+\frac{\mathbf{1}^T(A_{12}A_{21})\mathbf{1}}{r(r+1)^2}
+\frac{||A_{12}^T\mathbf{1}||~\sqrt{tr(A_{12})tr(A_{21})}~||A_{21}\mathbf{1}||}{r(r+1)^{2}(r(r+1)-\sqrt{tr(A_{12})tr(A_{21})})}\\
&\le& \frac{w}{r+1}+\frac{\mathbf{1}^T(A_{12}A_{21})\mathbf{1}}{r(r+1)^2}
+\frac{((k-w)w^2+3t^2)\sqrt{e_1e_2}}{r(r+1)^{2}(r(r+1)-\sqrt{e_1e_2})}.
\end{eqnarray*}
Then for $r\ge k-1$, the above inequalities are majorized by
$$f(r)=\frac{1}{r+1}\left[w+\frac{\mathbf{1}^T(A_{12}A_{21})\mathbf{1}}{k^2-k}
+\frac{((k-w)w^2+3t^2)\sqrt{e_1e_2}}{(k^2-k)(k^2-k-\sqrt{e_1e_2})}\right].$$
Since $$w+\frac{\mathbf{1}^T(A_{12}A_{21})\mathbf{1}}{k^2-k}
+\frac{((k-w)w^2+3t^2)\sqrt{e_1e_2}}{(k^2-k)(k^2-k-\sqrt{e_1e_2})}-1$$ is the solution of $f(r)=1,$
let $g(r)=\sum_{i=0}^{\infty}\frac{\mathbf{1}^T(A_{12}A_{21})^i\mathbf{1}}{r^i(r+1)^{i+1}}$. We observe  the following fact:\\
$g(r), f(r)$ are strictly decreasing on $(0,\infty)$ and $g(r)\le f(r)$ for $r$ on $[k-1,\infty)$. If $g(a)=1,\\f(b)=1$ where $0\le a,~b$, then $a\le b$. Moreover if $g(r)<f(r)$ on $[b,\infty)$, then $a< b$.\\
Therefore we have
$$\rho(D)\le w+\frac{\mathbf{1}^T(A_{12}A_{21})\mathbf{1}}{k^2-k}
+\frac{((k-w)w^2+3t^2)\sqrt{e_1e_2}}{(k^2-k)(k^2-k-\sqrt{e_1e_2})}-1.$$
If $w=k-s,~1\le s< \sqrt{t}+1$, combining with inequalities $(\ref{inequality1})$ and $(\ref{inequality2})$, we have
\begin{eqnarray*}
&&\sqrt{e_1e_2}\le \frac{e_1+e_2}{2}\le s(k-s)+t+\sqrt{t}/2,\\
&&k^2-k-\sqrt{e_1e_2}\ge k^2-k-(s(k-s)+t+\sqrt{t}/2),\\
&&\mathbf{1}^TA_{12}A_{21}\mathbf{1}\le s(k-s)(k-s-1)+(t+1)^2,\\
&&||A_{12}^T\mathbf{1}||~||A_{12}^T\mathbf{1}||\le s(k-s)^2 +3t^2.
\end{eqnarray*}
Hence
\begin{eqnarray}\label{inequality3}
\rho(D)&\le&k-s+\frac{s(k-s)(k-s-1)+(t+1)^2}{k^2-k}+\nonumber\\
&&\frac{(s(k-s)^2+3t^2)(s(k-s)+t+\sqrt{t}/2)}{(k^2-k)(k^2-k-(s(k-s)+t+\sqrt{t}/2))}-1\nonumber\\
&=&k-1+\frac{-2s^2+s^2(s+1)/k+(t+1)^2/k}{k-1}+\nonumber\\
&&\frac{(s(1-s/k)^2+3t^2/k^2)(s(1-s/k)+t/k+\sqrt{t}/{2k})}{(k-1)(1-1/k-(s(k-s)+t+\sqrt{t}/2)/k^2)}\\
&\le&k-1-\frac{2s^2}{3(k-1)}.\nonumber
\end{eqnarray}
Then $\rho(D)<k-1$, this is a contradiction. So we know that if $k>4t^4+4$, then $\rho(D)<k-1$ for $w(D)<k$. For $w(D)=k$, by Lemma~\ref{Lw}, the assertion holds. This completes the proof.
\end{proof}
{\bf Remark:} In the proof of the above theorem, we have used $k>4t^4+4$ in the inequality $(\ref{inequality3})$. Otherwise, let $s=1, ~k<(t+1)^2$, from inequality $(\ref{inequality3})$, we only can get an upper bound for $\rho(D)$, but not get $\rho(D)<k-1$. Moreover, we also can see that the formula for the upper bound is complicated. Further, we also can not get the explicit value of $\rho(D^\#)$, so it is not easy to estimate the size relation between this bound with $\rho(D^\#)$.

Now we  are ready to present the proof of the Theorem~\ref{mainth1}:\\
\begin{proof}
By Lemma~\ref{TL}, $D^{\#}$ is the only simple digraph with $e$ arcs having the maximum  spectral radius in the set $\mathcal{D^{**}}(e)$. It follows from Corollary~\ref{EL6} that apart from isolated vertices,  $D^{\#}$  is the only digraph with $e$ arcs having
the maximum  spectral radius in the set $\mathcal{D}(e)$. Hence the assertion holds.
 \end{proof}

\section{Proof of the theorem~\ref{mainth2}}
In the above section we have characterized all extremal digraphs having the maximum spectral radius in the set ${\mathcal{D}}(e)$ for  $k$  much larger than $t$.  In this section,  we characterize all extremal digraphs with special arcs number.
\begin{lemma}\label{T4}
 If $e=k(k-1)$ or $e=k(k-1)+2k-2$, then for any $D\in D^{**}(e)$, $\rho(D)\le \rho(D^\#)$ with equality if and only if $D=D^\#$.
\end{lemma}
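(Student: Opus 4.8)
The plan is to bound $\rho(D)$ by cases on the clique number $w:=w(D)$. Since $\overleftrightarrow{K_w}$ is a subdigraph of $D$ we have $w(w-1)\le e$, which forces $w\le k$ in both cases, and by Proposition~\ref{PL1} we may write
$$A(D)=\left(\begin{matrix}J_w-I_w & A_{12}\\ A_{21}& 0\end{matrix}\right),$$
so Corollary~\ref{FrC} applies with the $w\times w$ clique block playing the role of $J_k-I_k$. I will treat $w=k$, $w\le k-2$, and $w=k-1$ separately.

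First, for $w=k$: when $e=k(k-1)$ the clique already accounts for all $k(k-1)$ arcs, hence $A_{12}=A_{21}=0$, and since $D$ is strongly connected this gives $D=\overleftrightarrow{K_k}=D^\#$; when $e=k(k-1)+2k-2$, Lemma~\ref{Lw} gives $\rho(D)\le\rho(D^\#)$ with equality iff $D=D^\#$ (valid for $k>2$; the remaining small value $e=4$, i.e.\ $k=2$, is checked directly). For $w\le k-2$: Lemma~\ref{Lu} yields $\rho(D)\le f(w)$ with $2f(w)=w-1+\sqrt{(w-1)^2+2(e-w(w-1))}$, and $f$ is strictly increasing on $[1,k-1]$ since $2f'(w)=1-w/\sqrt{2e+1-w^2}>0$ there (that is, $2w^2<2e+1$, which holds for $w\le k-1$). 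Hence $\rho(D)\le f(k-2)$, and a short computation shows $f(k-2)<\rho(D^\#)$ in both cases: for $e=k(k-1)$ one needs $k-3+\sqrt{(k-1)(k+3)}<2(k-1)$, and for $e=k(k-1)+2k-2$ one compares $k-3+\sqrt{(k-1)(k+7)}$ with $2\rho(D^\#)=k-2+\sqrt{(k-2)^2+8(k-1)}$; both follow by squaring.

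The main obstacle is $w=k-1$, where $f(k-1)=\rho(D^\#)$ exactly, so Lemma~\ref{Lu} alone gives only the non-strict bound. Here I would use Corollary~\ref{FrC} directly. Put $M:=A_{12}A_{21}$, let $\mathbf 1\in\mathbb R^{k-1}$, and set $E:=|E(A_{12})|+|E(A_{21})|=e-(k-1)(k-2)$, so $E=2(k-1)$ resp.\ $E=4(k-1)$ in the two cases. Then $\rho(D)$ is the unique positive root of the strictly decreasing function $g_D(r)=\sum_{i\ge 0}\mathbf 1^TM^i\mathbf 1\big/\big(r^i(r+1)^{i+1}\big)$ (strictly decreasing because its $i=0$ term $\tfrac{k-1}{r+1}$ is), so it suffices to prove $g_D(r_0)<1$ at $r_0:=\rho(D^\#)$; this forces $\rho(D)<r_0$. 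Splitting off the terms $i=0,1$ and bounding the tail via
$$\mathbf 1^TM^i\mathbf 1=(A_{12}^T\mathbf 1)^T(A_{21}A_{12})^{i-1}(A_{21}\mathbf 1)\le\|A_{12}^T\mathbf 1\|\,\|A_{21}\mathbf 1\|\,C^{i-1},\qquad C:=\tfrac12 E,$$
which is legitimate since $C\ge\sqrt{|E(A_{12})|\,|E(A_{21})|}\ge\nu(A_{12})\nu(A_{21})\ge\nu(A_{21}A_{12})$, the geometric tail sums to
$$g_D(r)\le\frac{k-1}{r+1}+\frac{\mathbf 1^TM\mathbf 1}{r(r+1)^2}+\frac{\|A_{12}^T\mathbf 1\|\,\|A_{21}\mathbf 1\|\,C}{r(r+1)^2\big(r(r+1)-C\big)}.$$

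To conclude I would insert the majorization bounds of Lemma~\ref{LM}: for $w=k-1$ they give $\mathbf 1^TM\mathbf 1\le(k-1)(k-2)$ resp.\ $2(k-1)(k-2)+1$, and $\|A_{12}^T\mathbf 1\|\,\|A_{21}\mathbf 1\|\le\tfrac12\big(\|A_{12}^T\mathbf 1\|^2+\|A_{21}\mathbf 1\|^2\big)\le(k-1)^2$ resp.\ $2(k-1)^2$, together with $C\le k-1$ resp.\ $2(k-1)$. Substituting $r=r_0$ and clearing denominators — using $r_0=k-1$ when $e=k(k-1)$, and the identities $r_0^2=(k-2)r_0+2(k-1)$, hence $r_0(r_0-k+2)=2(k-1)$ and $r_0(r_0+1)=(k-1)(r_0+2)$, when $e=k(k-1)+2k-2$ — the inequality $g_D(r_0)<1$ reduces to an elementary true inequality ($k^2-1<k^2$ in the first case, $2(k-1)>1$ in the second). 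Thus $\rho(D)<\rho(D^\#)$ whenever $w(D)=k-1$. Since $w(D^\#)=k$ in both cases, combining the three cases yields $\rho(D)\le\rho(D^\#)$ for every $D\in\mathcal D^{**}(e)$, with equality precisely when $D=D^\#$.
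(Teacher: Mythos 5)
Your proof is correct, and its skeleton (splitting on the clique number $w(D)$, invoking Lemma~\ref{Lu} when $w$ is small and Lemma~\ref{Lw} when $w=k$) coincides with the paper's. The genuine difference is in the critical case $w=k-1$, where the bound of Lemma~\ref{Lu} degenerates to equality with $\rho(D^\#)$ in both arc counts. The paper handles this (only for $e=k(k-1)+2k-2$) by tracing the equality conditions through the Friedland bound: equality forces $\nu(A_{12})=\nu(A_{21})=\sqrt{|E(A_{12})|}=\sqrt{|E(A_{21})|}$, hence rank-one all-ones blocks, after which it excludes equality ``by a calculation.'' You instead plug the block form of Proposition~\ref{PL1} into the series of Corollary~\ref{FrC}, insert the majorization bounds of Lemma~\ref{LM} for the $i=1$ term, bound the tail geometrically with ratio $C=\tfrac12\bigl(|E(A_{12})|+|E(A_{21})|\bigr)\ge\nu(A_{21}A_{12})$, and evaluate at $r_0=\rho(D^\#)$ using $r_0(r_0+1)=(k-1)(r_0+2)$; I checked both final reductions ($k^2-1<k^2$ and $2(k-1)>1$) and the intermediate constants ($\mathbf 1^TM\mathbf 1\le (k-1)(k-2)$ resp.\ $2(k-1)(k-2)+1$, product of norms $\le (k-1)^2$ resp.\ $2(k-1)^2$), and they are right. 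This is exactly the machinery the paper reserves for Lemma~\ref{TL}, so your route is not more elementary, but it is more explicit and uniform, and it quietly repairs a small gap: for $e=k(k-1)$ the paper asserts that equality in Lemma~\ref{Lu} holds if and only if $w(D)=k$, even though $f(k-1)=f(k)=k-1$ there (since $(k-2)^2+4(k-1)=k^2$), so the case $w=k-1$ is not actually eliminated by monotonicity alone; your computation disposes of it outright. The only loose ends are the degenerate small cases ($k=2$, and the isolated vertex formally present in $D^\#$ when $t=0$), which you flag and which are harmless.
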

\begin{proof}
If $e=k(k-1)$, then by Lemma~\ref{Lu},

\begin{eqnarray*}\label{E4}
\rho(D)&\le& \frac{w(D)-1+\sqrt{(w(D)-1)^2+2(e-w(D)(w(D)-1))}}{2}
\le k-1
\end{eqnarray*}
 with equality if and only if $w(D)=k$. On the other hand
  $\rho(D^{\#})=k-1$ for $D^{\#}\in \mathcal{D^{**}}(e)$.

 Now let $e=k(k-1)+2k-2$. If $w(D)\le k-1$, then by Lemma~\ref{Lu},
\begin{eqnarray*}\label{E4}
\rho(D)&\le& \frac{w(D)-1+\sqrt{(w(D)-1)^2+2(e-w(D)(w(D)-1))}}{2}\\
&\le & \frac{k-2+\sqrt{(k-2)^2+8(k-1)}}{2}=\rho(D^{\#}).
\end{eqnarray*}
Moreover, we claim $\rho(D)<\rho(D^{\#})$. In fact, if $\rho(D)=\rho(D^{\#})$, then
by the proof of Lemma~\ref{Lu},  $\nu(A_{12})=\nu(A_{21})=\sqrt{e(A_{12})}=\sqrt{e(A_{21})}=\sqrt{2(k-1)}$, which implies $rank(A_{12})=1$ and $rank(A_{21})=1$. Hence we have
$$A_{12}=\left(\begin{array}{c} J_{p\times q}\\ 0_{(k-1-p)\times q}\end{array}\right),
A_{21}=\left( J_{q\times p}\ \ 0_{q\times(k-1-p)}\right),$$
 where $pq=2(k-1)$ and $p\ge 3$. By a calculation, we have $\rho(D)<\frac{k-2+\sqrt{(k-2)^2+8(k-1)}}{2}$. It is a contradiction.

 If $w(D)=k$, by Lemma~\ref{Lw}, $\rho(D)\le \rho(D^{\#})$ with equality if and only if
 $D=D^{\#}$.
Thus the assertion holds.
\end{proof}
\begin{lemma}\label{T5}
Let $e=k(k-1)+1$. If  $D\in \mathcal{D}(e)$, then $\rho(D)\le k-1$ with equality  if and only if $D$ is,  apart from isolated vertices, the complete simple digraph $\overleftrightarrow{K_k}$ with one additional arc for $k>2$ and oriented triangle or the complete simple digraph $\overleftrightarrow{K_2}$ with one additional arc for $k=2$.
\end{lemma}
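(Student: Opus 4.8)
The plan is to prove $\rho(D)\le k-1$ for all $D\in\mathcal D(e)$ and then read off the equality cases. Since $\overleftrightarrow{K_k}$ together with one pendant arc lies in $\mathcal D(e)$ and has spectral radius $k-1$, it is enough to exclude $\rho(e)>k-1$. So suppose $\rho(e)>k-1$, and let $D\in\mathcal D(e)$, without isolated vertices, realize it. If $D$ is not strongly connected, then $\rho(D)=\rho(D_0)$ for a strong component $D_0$, and at least one arc of $D$ lies outside $D_0$ (otherwise the remaining vertices would be isolated), so $D_0$ has at most $k(k-1)$ arcs and $\rho(D)\le\rho(k(k-1))=k-1$ by monotonicity of $\rho(\cdot)$ and Theorem~\ref{mainth2}(1) --- a contradiction. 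Hence $D$ is strongly connected.

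Next I would apply the shifting operation from the proof of Proposition~\ref{T2} to $D$: each move (delete $(v_i,v_j)$, add $(v_i,v_l)$ with $l<j$ and $x_l\ge x_j$) keeps the arc number and keeps the spectral radius equal to $\rho(e)$, with $x$ still an eigenvector. If some intermediate digraph loses strong connectivity we are back in the previous situation and again get $\rho(e)\le k-1$; otherwise the process terminates at some $D'\in\mathcal D^{**}(e)$ with $\rho(D')=\rho(e)>k-1$. Write $A(D')=\left(\begin{smallmatrix}J_w-I_w&A_{12}\\A_{21}&0\end{smallmatrix}\right)$, $w=w(D')$. The value $w=k$ is impossible: then $D'[\{v_1,\dots,v_k\}]=\overleftrightarrow{K_k}$ already uses all $k(k-1)$ arcs, leaving only one arc, which cannot make a vertex outside the clique both reachable from and able to reach the clique (and $n=k$ would force $D'=\overleftrightarrow{K_k}$, the wrong arc count). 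The range $w\le k-2$ is excluded by the first chain of inequalities in the proof of Lemma~\ref{Lu} (which does not use $t\ne1$): it gives $\rho(D')\le f(w):=\tfrac12\bigl(w-1+\sqrt{(w-1)^2+2(e-w(w-1))}\bigr)$, and for $e=k(k-1)+1$ the inequality $f(w)\le k-1$ is equivalent to $w^2-(2k-1)w+(k^2-k-1)\ge0$, whose roots are $\tfrac{2k-1\pm\sqrt5}{2}$, so $f(w)<k-1$ for every integer $w\le k-2$ --- again a contradiction.

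This leaves the case $w=k-1$, which I expect to be the main obstacle, since here the generic norm estimates (Theorem~\ref{Fr1}, Lemma~\ref{Lu}) only yield $\rho(D')\le\tfrac12(k-2+\sqrt{k^2+2})$, which exceeds $k-1$ by a tiny ($O(1/k)$) amount. Now $|E(A_{12})|+|E(A_{21})|=2k-1$, the non-clique vertices form an independent set, and by Proposition~\ref{PL1} none of them points to $v_{k-1}$, so each non-clique vertex $u_r$ has out-degree $b_r$ into the clique with $1\le b_r\le k-2$ and in-degree $a_r$ from the clique with $1\le a_r\le k-1$, and $\sum_r(a_r+b_r)=2k-1$. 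I would instead use the exact equation of Corollary~\ref{FrC}: $\rho(D')$ is the unique positive root of $\sum_{i\ge0}\tfrac{\mathbf{1}^T(A_{12}A_{21})^i\mathbf{1}}{r^i(r+1)^{i+1}}=1$, and the left-hand side is strictly decreasing in $r$, so it suffices to show that substituting $r=k-1$ makes it $<1$, i.e. $\tfrac{k-1}{k}+\tfrac{\mathbf{1}^T(A_{12}A_{21})\mathbf{1}}{(k-1)k^2}+(\text{tail})<1$. Here Lemma~\ref{LM} gives $\mathbf{1}^T(A_{12}A_{21})\mathbf{1}\le(k-1)(k-2)+1$ together with control of $\|A_{12}^T\mathbf{1}\|^2+\|A_{21}\mathbf{1}\|^2$, and $\nu(A_{12}A_{21})\le\sqrt{|E(A_{12})||E(A_{21})|}$ lets one sum the tail as a geometric series. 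The delicate point --- and the crux of the lemma --- is that $\mathbf{1}^T(A_{12}A_{21})\mathbf{1}$, $\|A_{12}^T\mathbf{1}\|\,\|A_{21}\mathbf{1}\|$ and $\nu(A_{12}A_{21})$ cannot all be near their separate maxima at once, so they must be estimated jointly through the sequence $(a_r,b_r)$ (using $\sum_r(a_r+b_r)=2k-1$ and $b_r\le k-2$), which does give the strict inequality $<1$ and hence $\rho(D')<k-1$, the final contradiction. This proves $\rho(e)=k-1$.

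For the equality characterization, let $\rho(D)=k-1$ with $D$ having no isolated vertices. If $k>2$, the argument above shows that no strongly connected digraph with $e$ arcs attains $k-1$, so $D$ is not strongly connected; then $\rho(D_0)=k-1$ for a strong component $D_0$ with $e_0\le k(k-1)$ arcs. Since $\rho(\cdot)$ is strictly increasing at $k(k-1)$ --- by Proposition~\ref{T1} a maximizer with $k(k-1)-1$ arcs is strongly connected and not a complete digraph (as $k(k-1)-1$ is not of the form $m(m-1)$), so adding a suitable arc strictly raises its spectral radius, up to $\rho(k(k-1))=k-1$ --- we must have $e_0=k(k-1)$, and then Theorem~\ref{mainth2}(1) gives $D_0=\overleftrightarrow{K_k}$; the unique remaining arc of $D$ is then a pendant arc, so $D=\overleftrightarrow{K_k}$ with one additional arc. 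For $k=2$ a strongly connected digraph on $n$ vertices has at least $n$ arcs, forcing $n\le3$, and the only strongly connected digraph with exactly $3$ arcs is the oriented triangle; together with the non-strongly-connected case this leaves exactly the oriented triangle and $\overleftrightarrow{K_2}$ with one additional arc.
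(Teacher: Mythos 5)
Your overall architecture (reduce to a strongly connected digraph, shift into $\mathcal{D}^{**}(e)$, split on the clique number $w$) is reasonable, and the cases $w=k$, $w\le k-2$, and the equality characterization are handled correctly. But the case $w=k-1$, which you yourself call the crux, is not actually proved: you assert that a ``joint estimate'' of $\mathbf{1}^TA_{12}A_{21}\mathbf{1}$, $\|A_{12}^T\mathbf{1}\|\,\|A_{21}\mathbf{1}\|$ and $\nu(A_{12}A_{21})$ via the degree sequence $(a_r,b_r)$ gives the strict inequality, without carrying it out. This is not a routine verification. If you simply plug the individual worst-case bounds from Lemma~\ref{LM} and $\nu(A_{12}A_{21})\le\sqrt{e_1e_2}$ into the truncated series at $r=k-1$, the inequality fails: for $k=5$ (so $w=4$, $e_1+e_2=9$, worst distribution $a_1=4,b_1=3,a_2=b_2=1$) one gets $\tfrac45+\tfrac{13}{100}+\tfrac{\sqrt{170}\,\sqrt{20}}{20(20-\sqrt{20})}\approx 1.12>1$. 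So the separate maxima genuinely do not suffice, and the finer joint analysis you gesture at is exactly the missing content; moreover the margin is only $O(1/k)$, so even asymptotically the argument needs care, and small $k$ would need separate treatment. As it stands the central inequality $\rho(D')<k-1$ for $w=k-1$ is unsupported.

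For comparison, the paper avoids the case analysis on $w$ entirely. For a strongly connected maximizer $D$ it first observes that $|V(D)|\ge k+1$ (no simple digraph on $k$ vertices has $k(k-1)+1$ arcs) and then invokes Corollary~3.6 of \cite{Zhang2002}, which gives $\rho(D)<\sqrt{e-(n-1)}\le\sqrt{k(k-1)+1-k}=k-1$ in one line; the extremal digraphs are then forced to be non-strongly-connected with dominant component $\overleftrightarrow{K_k}$, exactly as in your final paragraph. If you do not want to import that external bound, you must supply the $w=k-1$ estimate in full.
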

\begin{proof}
For $k=2$, there are $3$ arcs, so it is easy to check that the assertion holds. Now we  assume that $k>2$.
There does not exist a simple digraph of order $k$ and arc number $e$. So we can suppose that  $|V(D)|>k$ and $D$ contains no isolated vertex. It is sufficient to prove that if $\rho(D)=\rho(e)$, then $\rho(D)=k-1$ and $D$ contains $\overleftrightarrow{K_k}$ as its subgraph. Next we suppose $\rho(D)=\rho(e)$ and $D_1$ is a strongly connected components with $\rho(D_1)=\rho(D)$. We claim that $D_1=D$ or $D_1=\overleftrightarrow{K_k}$. If $D_1\neq \overleftrightarrow{K_k}$ and $D_1\neq D$, then there exists an edge in $E(D)$ but not in $E(D_1)$. Since $D_1\neq \overleftrightarrow{K_k}$, then $D_1$ is not a simple complete digraph, add an edge to $D_1$ getting $D_2$, by Perron-Frobenius theorem, $\rho(e)=\rho(D_1)<\rho(D_2)\le \rho(e)$, a contradiction. If $D_1=\overleftrightarrow{K_k}$, then we complete the proof. If $D_1=D$, By Proposition~\ref{T2}, we can suppose that $D\in D^{**}(e)$.
By Corollary~${3.6}$ in \cite{Zhang2002}, we can find that
$$\rho(D)< \sqrt{k(k-1)+1-(n-1)}\le k-1,$$
which is a contradiction. This completes the proof.
\end{proof}
\begin{lemma}\label{T6}
Let $e=k(k-1)+2k-1$. If  $D\in D^{**}(e)$, then $\rho(D)\le \rho(D^\#)$ with equality if and only if $D=D^\#$.
\end{lemma}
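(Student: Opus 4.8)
The plan is to follow the pattern already established for the neighbouring cases, in particular Lemma~\ref{T4} (the case $e=k(k-1)+2k-2$), since here $e=k(k-1)+2k-1$ means $t=2k-1$ is odd, so $D^\#$ is obtained from $\overleftrightarrow{K_{k+1}}$ by removing a single arc. First I would split on the clique number $w=w(D)$ of $D\in\mathcal{D}^{**}(e)$. If $w(D)=k$, then Lemma~\ref{Lw} applies directly (since $k>2$ whenever $2k-1\le 2k-1$ is nontrivial) and gives $\rho(D)\le\rho(D^\#)$ with equality iff $D=D^\#$; this disposes of the main case. If $w(D)=k+1$, then $e\ge(k+1)k>e$, impossible, so $w(D)\le k$. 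Hence the real work is in showing that $w(D)\le k-1$ forces a strict inequality $\rho(D)<\rho(D^\#)$.

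For the case $w=w(D)\le k-1$, I would invoke Lemma~\ref{Lu}: with $A(D)=\bigl(\begin{smallmatrix}J_w-I_w&A_{12}\\ A_{21}&0\end{smallmatrix}\bigr)$ we get
$$\rho(D)\le\frac{w-1+\sqrt{(w-1)^2+2(e-w(w-1))}}{2}\le\frac{k-1+\sqrt{(k-1)^2+2(2k-1)}}{2}=\frac{k-1+\sqrt{(k-1)^2+4(k-1)+2}}{2}.$$
This upper bound is slightly \emph{larger} than $\rho(D^\#)=\frac{k-1+\sqrt{(k-1)^2+4(k-1)}}{2}$, so Lemma~\ref{Lu} alone is not quite enough, and I must extract the equality/near-equality structure exactly as in Lemma~\ref{T4}. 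Specifically, equality throughout the chain in Lemma~\ref{Lu} would require $w=k-1$ (so $f(k-1)=f(k)$, which in fact is a strict inequality — $f$ is strictly increasing — already giving something), $\nu(A_{12})=\nu(A_{21})=\sqrt{|E(A_{12})|}=\sqrt{|E(A_{21})|}$, and $|E(A_{12})|=|E(A_{21})|$. Tracking these forces $\mathrm{rank}(A_{12})=\mathrm{rank}(A_{21})=1$, i.e. $A_{12}=\bigl(\begin{smallmatrix}J_{p\times q}\\ 0\end{smallmatrix}\bigr)$, $A_{21}=(J_{q\times p}\ \ 0)$ with $pq=2k-1$ — but wait, here $t=2k-1$ is odd so $|E(A_{12})|+|E(A_{21})|=2k-1$ cannot be split into two equal integers, so in fact $|E(A_{12})|\neq|E(A_{21})|$ and the AM--GM step $2\sqrt{|E(A_{12})||E(A_{21})|}\le|E(A_{12})|+|E(A_{21})|$ is \emph{strict}. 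Combined with the strictness of $f(k-1)<f(k)$, this should already yield $\rho(D)<\rho(D^\#)$ when $w\le k-1$ — but I must be careful, because the target $\rho(D^\#)$ uses $f(k)$ with $t=2k-1$, and the gap between $2\sqrt{|E(A_{12})||E(A_{21})|}$ and $2k-1$ when the two parts are $k-1$ and $k$ is $2\sqrt{k(k-1)}$ versus $2k-1$, a difference of order $1/k$; I need to check this gap exceeds the gap between the Lemma~\ref{Lu} bound at $w=k-1$ and $\rho(D^\#)$. The cleanest route is probably: for $w=k-1$, write $e-w(w-1)=(k-1)(k-2)+2k-1 - (k-2)(k-1)\cdot 0$... more carefully, $e-(k-1)(k-2)=k(k-1)+2k-1-(k^2-3k+2)=4k-3$, so $|E(A_{12})|+|E(A_{21})|=4k-3$ (odd), $2\sqrt{|E(A_{12})||E(A_{21})|}\le 2\sqrt{(2k-1)(2k-2)}<4k-3$, giving $\rho(D)\le\frac{k-2+\sqrt{(k-2)^2+4\sqrt{(2k-1)(2k-2)}}}{2}$, and I would verify by direct computation that this is $<\rho(D^\#)$. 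For $w\le k-2$ the bound from Lemma~\ref{Lu} with the strictly increasing $f$ is comfortably below $\rho(D^\#)$.

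The main obstacle I anticipate is exactly this comparison of two nested radicals of the form $\frac{a+\sqrt{a^2+b}}{2}$ for slightly different $a,b$: showing $\frac{k-2+\sqrt{(k-2)^2+4\sqrt{(2k-1)(2k-2)}}}{2}<\frac{k-1+\sqrt{(k-1)^2+4(k-1)}}{2}$ for all relevant $k$. This is routine but needs care since both quantities are $k-1+O(1)$ and the difference is small; squaring twice and reducing to a polynomial inequality in $k$ (valid for $k$ at least some small bound, with the remaining small cases checked by hand) should settle it. The odd parity of $t=2k-1$ is the structural feature that makes the argument go through — it rules out the balanced rank-one configuration that was the equality case for $t=2k-2$ in Lemma~\ref{T4} — so the write-up should highlight that. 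Finally, assembling: for $w(D)=k$ equality characterization comes from Lemma~\ref{Lw}; for $w(D)<k$ the inequality is strict; hence $\rho(D)\le\rho(D^\#)$ on all of $\mathcal{D}^{**}(e)$ with equality iff $D=D^\#$, which is the claim.
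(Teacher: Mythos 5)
Your strategy is sound and would yield a correct proof, but it is a genuinely different route from the paper's. The paper splits on the \emph{number of vertices}: since $e=(k+1)k-1$, any $D\in\mathcal{D}^{**}(e)$ on exactly $k+1$ vertices is forced to be $\overleftrightarrow{K_{k+1}}$ minus one arc, i.e.\ $D^\#$; and for $n>k+1$ it invokes the external bound $\rho(D)<\sqrt{e-(n-1)}\le\sqrt{k^2-2}$ from Corollary~3.6 of \cite{Zhang2002} and checks that the cubic $\psi(\lambda)=\lambda^3-(k-2)\lambda^2-(2k-2)\lambda-(k-1)$ defining $\rho(D^\#)$ satisfies $\psi(\sqrt{k^2-2})<0$. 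You instead split on the \emph{clique number}, citing Lemma~\ref{Lw} for $w(D)=k$ and Lemma~\ref{Lu} for $w(D)\le k-1$; this has the advantage of using only machinery already in the paper. Two remarks on your version. First, the obstacle you anticipate in the $w\le k-1$ case largely evaporates: you compare the wrong bound ($f(k)$, which is indeed too big) when the relevant one is $f(k-1)$. With $w\le k-1$ and $e-(k-1)(k-2)=4k-3$, Lemma~\ref{Lu} already gives
$$\rho(D)\le f(k-1)=\frac{k-2+\sqrt{k^2+4k-2}}{2}<\frac{k-1+\sqrt{k^2+2k-3}}{2}=\rho(D^\#),$$
since after squaring the inequality reduces to $2k>3$; the parity/AM--GM refinement and the double-radical comparison are unnecessary (though harmless, since they only shrink the bound further). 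Second, Lemma~\ref{Lw} requires $k>2$, so your argument leaves the cases $k\le 2$ to a separate hand check, which the paper's vertex-count argument handles uniformly. With those two adjustments your proof is complete and arguably cleaner than the published one.
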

\begin{proof}
If $k=1$,  the assertion clearly holds. Now we  assume that $k>1$.
If $|V(D)|=k+1$, it is easy to check that $D=D^\#$.  If $n=|V(D)|>k+1$, by Corollary~${3.6}$ in \cite{Zhang2002},
$$\rho(D)< \sqrt{k(k-1)+2k-1-(n-1)}\le \sqrt{k^2-2}.$$
By the paragraph before Theorem~\ref{mainth1}, $A(D^\#)$ is the largest positive root of the following equation
$$\psi(\lambda)=\lambda^3-(k-2)\lambda^2-(2k-2)\lambda-(k-1)=0.$$
It is easy to see that
\begin{eqnarray*}
\psi(\sqrt{k^2-2})&=&\sqrt{k^2-2}(k^2-2k)-(k-2)(k^2-2)-(k-1) \\
&=& \frac{2(k-2)\sqrt{k^2-2}}{k+\sqrt{k^2-2}}-(k-1)\\
&<&0.\end{eqnarray*}
Hence $\rho(D)<\rho(D^{\#})$. This completes the proof.
\end{proof}

Now we  are ready to give the proof of the Theorem~\ref{mainth2}:\\
\begin{proof}
Theorem\ref{mainth2} follows from  Corollary~\ref{EL6},~Lemmas~\ref{T4},~\ref{T5} and ~\ref{T6}.
\end{proof}
\begin{corollary}
 Let $e=k(k-1)+t$ with $0\le t\le 2k-1$. Then
 for any digraph $D\in \mathcal{D}(e)$,
 $\rho(D)\le k-1+\frac{t}{2(k-1)}$.
 \end{corollary}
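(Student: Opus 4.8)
The plan is to bound $\rho(D)$ using the general upper bound coming from Theorem~\ref{Fr1} together with the clique structure, exactly as in Lemma~\ref{Lu}, but now allowing the clique number $w$ to range freely. First I would reduce to the case $D\in\mathcal{D}^{**}(e)$: by Proposition~\ref{T2} (when $t\neq 1$) we have $\rho(e)=\rho^{**}(e)$, so it suffices to prove the inequality for $D\in\mathcal{D}^{**}(e)$, and the cases $t=0,1$ are handled directly (for $t=0$, $\rho(e)=k-1$ by Theorem~\ref{mainth2}(1); for $t=1$, $\rho(e)=k-1$ by Lemma~\ref{T5}, and $k-1\le k-1+\tfrac{1}{2(k-1)}$). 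For $D\in\mathcal{D}^{**}(e)$ with clique number $w$, write $A(D)$ in the block form~(\ref{Equation3}) and apply the chain of inequalities from Lemma~\ref{Lu}:
\begin{equation*}
\rho(D)\le \frac{w-1+\sqrt{(w-1)^2+2(e-w(w-1))}}{2}=:f(w).
\end{equation*}

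Next I would analyze $f(w)$ as a function of $w$ on $\{1,\dots,k\}$ (or rather on the real interval $[1,k]$). As shown in the proof of Lemma~\ref{Lu}, $f'(w)>0$ for $1\le w\le k-1$, and one checks directly that $f(k-1)\le f(k)$, so $f$ is maximized at $w=k$, giving
\begin{equation*}
\rho(D)\le f(k)=\frac{k-1+\sqrt{(k-1)^2+2t}}{2}.
\end{equation*}
It then remains to show the purely numerical inequality
\begin{equation*}
\frac{k-1+\sqrt{(k-1)^2+2t}}{2}\le k-1+\frac{t}{2(k-1)},
\end{equation*}
i.e. $\sqrt{(k-1)^2+2t}\le (k-1)+\frac{t}{k-1}$. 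Squaring the right side gives $(k-1)^2+2t+\frac{t^2}{(k-1)^2}\ge (k-1)^2+2t$, which holds; since both sides of the pre-squared inequality are nonnegative, the claimed bound follows.

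The only genuinely delicate point is making sure the reduction covers all $t$: the machinery $\rho(e)=\rho^{**}(e)$ requires $t\neq 1$, so $t=1$ must be dispatched separately via Lemma~\ref{T5} (and $t=0$ via Theorem~\ref{mainth2}(1) or directly, since $\overleftrightarrow{K_k}$ has spectral radius $k-1$); everything else is routine monotonicity of $f$ plus an elementary squaring argument, so I do not expect any real obstacle beyond bookkeeping. One should also note the edge case $k=1$, where $e=t\le 1$ and the statement is trivial, and the case where $D$ has isolated vertices, which does not affect $\rho(D)$.
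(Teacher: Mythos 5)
Your proposal is correct and follows essentially the same route as the paper: dispose of $t=0,1$ separately (the paper cites Lemma~\ref{T4} and Lemma~\ref{T5}), then for $2\le t\le 2k-1$ invoke Proposition~\ref{T2} and Lemma~\ref{Lu} to get $\rho(D)\le \frac{k-1+\sqrt{(k-1)^2+2t}}{2}$, and finish with the elementary squaring estimate $\sqrt{(k-1)^2+2t}\le (k-1)+\frac{t}{k-1}$. No substantive differences.
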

\begin{proof}
For $t=0,~1$, by Lemma~\ref{T4} and Lemma~\ref{T5}, the assertion holds. Next we assume that $2\le t\le 2k-1$. By Proposition~\ref{T2} and Lemma~\ref{Lu}, we have
$$\rho(D)\le \frac{k-1+\sqrt{(k-1)^2+2t}}{2}\le k-1+\frac{t}{2(k-1)}.$$
This completes the proof.
\end{proof}
Based on Theorems~\ref{mainth1}, \ref{mainth2} and the computation of spectral radius of simple digraphs in $\mathcal{D^{**}}(e)$ with the  number of  arcs  less than 75, we  may propose the  following conjecture.
\begin{conjecture}
Let $e=k(k-1)+t,~ 1<t<2k-2$. If  $D\in \mathcal{D^{**}}(e)$, then $\rho(D)\le \rho(D^\#)$ with equality if and only if $D=D^\#$.
\end{conjecture}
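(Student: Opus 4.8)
Our target is the assertion that, for $e=k(k-1)+t$ with $1<t<2k-2$ and $D\in\mathcal{D}^{**}(e)$, one has $\rho(D)\le\rho(D^{\#})$ with equality only for $D=D^{\#}$. The plan is to run the same three‑way split on the clique number $w=w(D)$ that organizes Lemmas~\ref{Lu}--\ref{TL}, and to put all the genuinely new effort into the single range where the hypothesis $k>4t^{4}+4$ was really spent. Note first that $1<t<2k-2$ forces $t\ge2$ and $k\ge3$, and $\lfloor t/2\rfloor\le k-2$, so $w(D^{\#})=k$ too. By Proposition~\ref{PL1} we may take $A(D)$ in the block form $(\ref{Equation3})$ with clique $\overleftrightarrow{K_{w}}$. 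If $w=k$, then $k>2$ and Lemma~\ref{Lw} already delivers $\rho(D)\le\rho(D^{\#})$ with equality iff $D=D^{\#}$ — with no largeness hypothesis on $k$ at all. Hence the whole problem collapses to proving the \emph{strict} inequality $\rho(D)<\rho(D^{\#})$ whenever $w\le k-1$; and because this matches the hypothesis of Corollary~\ref{EL6}, proving it also upgrades the conclusion from $\mathcal{D}^{**}(e)$ to $\mathcal{D}(e)$.

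For $w\le k-1$ I split once more. When $w\le k-1-\sqrt{t}$, Lemma~\ref{Lu} gives $\rho(D)\le\tfrac12\big(w-1+\sqrt{(w-1)^{2}+2(e-w(w-1))}\big)$; substituting $e-w(w-1)=(k-w)(k+w-1)+t$ and using that this bound is increasing in $w$, a short computation shows it stays strictly below $k-1$ (at the endpoint it equals $\tfrac12\big(k-2-\sqrt t+\sqrt{(k+\sqrt t)^{2}-2\sqrt t}\big)<k-1$), while $\rho(D^{\#})>k-1$ because the cubic $\lambda^{3}-(k-2)\lambda^{2}-(k+\lfloor t/2\rfloor-1)\lambda+\lfloor t/2\rfloor(k-\lceil t/2\rceil-1)$ of the paragraph preceding Theorem~\ref{mainth1} takes the value $-\lfloor t/2\rfloor\lceil t/2\rceil<0$ at $\lambda=k-1$. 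So it remains to handle $w=k-s$ with $1\le s<\sqrt t+1$. For these I would again use Corollary~\ref{FrC}: $\rho(D)$ is the unique positive root of $\sum_{i\ge0}\frac{\mathbf 1^{T}(A_{12}A_{21})^{i}\mathbf 1}{r^{i}(r+1)^{i+1}}=1$, estimate the terms $i=0,1$ by the majorization bounds of Lemma~\ref{LM} specialized exactly as in the proof of Lemma~\ref{TL} (so $\mathbf 1^{T}(A_{12}A_{21})\mathbf 1$ and $\|A_{12}^{T}\mathbf 1\|^{2}+\|A_{21}\mathbf 1\|^{2}$ are $s(k-s)(k-s-1)+O(t^{2})$ and $s(k-s)^{2}+O(t^{2})$), bound the tail $\sum_{i\ge2}$, and then compare the resulting upper bound for the root against $\rho(D^{\#})$, which by Corollary~\ref{FrC} is the root of $\frac{k}{r+1}+\lfloor t/2\rfloor\lceil t/2\rceil\sum_{i\ge1}\frac{\lfloor t/2\rfloor^{i-1}}{r^{i}(r+1)^{i+1}}=1$, via the ``two decreasing functions $g\le f$ have their roots ordered, strictly if $g<f$'' device from the proof of Lemma~\ref{TL}.

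The main obstacle is precisely that tail estimate $\sum_{i\ge2}$ in the range $w=k-s$ with $s$ small, once $k$ is only moderately larger than $t$ (the statement allows $k$ as small as about $t/2$). In Lemma~\ref{TL} the tail is dominated using $\mathbf 1^{T}(A_{12}A_{21})^{i}\mathbf 1\le\|A_{12}^{T}\mathbf 1\|\,\nu(A_{12}A_{21})^{i-1}\,\|A_{21}\mathbf 1\|\le\|A_{12}^{T}\mathbf 1\|\,\|A_{21}\mathbf 1\|\,\big(|E(A_{12})|\,|E(A_{21})|\big)^{(i-1)/2}$, which over‑counts by a factor tied to the width of the supports of $A_{12},A_{21}$; it is exactly this loss that $k>4t^{4}+4$ is there to absorb, since the tail must be beaten against a gap of size $2s^{2}/(3(k-1))$ while carrying errors of order $t^{2}/k$ and $t^{4}/k^{2}$. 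To reach all admissible $k$ one needs a tail bound uniform in $k$ that exploits the nested $\mathcal{D}^{**}$ structure: everything of $D$ outside $\overleftrightarrow{K_{w}}$ carries only $e-w(w-1)\le2s(k-s)+2t+O(\sqrt t)$ arcs, so $A_{12}A_{21}$ is simultaneously low‑rank and of small Perron value; the hope is to replace $\nu(A_{12}A_{21})$ by the true $\rho(A_{12}A_{21})=\rho(A_{21}A_{12})$ together with a bound on the overlap of its Perron vector with $\mathbf 1$, giving something like $\mathbf 1^{T}(A_{12}A_{21})^{i}\mathbf 1\le\big(\mathbf 1^{T}(A_{12}A_{21})\mathbf 1\big)\rho(A_{21}A_{12})^{\,i-1}$ with $\rho(A_{21}A_{12})=O(t)$ rather than $O(tk)$. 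I expect that, once such a uniform tail bound is in hand, the comparison with $\rho(D^{\#})$ goes through essentially verbatim from the proof of Lemma~\ref{TL} with the hypothesis $k>4t^{4}+4$ deleted. A secondary difficulty, absent in the large‑$k$ regime, is that near $w=k-1$ one can no longer afford the slack conclusion $\rho(D)<k-1$, so the comparison must be carried out at the level of the two defining equations of Corollary~\ref{FrC} themselves, $\rho(D^{\#})$ being available only implicitly.

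Should the analytic route resist at small $k$, the fallback I would try is a combinatorial ``compression toward $D^{\#}$'': given $D\in\mathcal{D}^{**}(e)$ with $w(D)<k$, produce an arc‑count‑preserving rearrangement that is non‑decreasing for $\rho$ and strictly raises the clique number, so that after finitely many steps $w=k$ and Lemma~\ref{Lw} closes the argument (or else the process terminates at $D^{\#}$). The obstruction there is that absorbing an outside vertex into the clique is not arc‑preserving by itself; one would have to couple it with a compensating deletion and reinsertion of arcs among the outside vertices while tracking the Perron vector — delicate in general, but for $s=1$ plausibly tractable by a direct $3\times3$ or $4\times4$ quotient‑matrix computation in the spirit of the cubic preceding Theorem~\ref{mainth1}.
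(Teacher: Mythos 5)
The statement you are addressing is presented in the paper only as a \emph{conjecture} --- the authors offer no proof, just the partial results of Theorems~\ref{mainth1} and~\ref{mainth2} together with numerical verification for all $e<75$ --- so there is no argument of theirs to measure yours against. Your reductions are correct as far as they go: the case $w(D)=k$ is indeed settled unconditionally by Lemma~\ref{Lw}; the case $w\le k-1-\sqrt t$ does give $\rho(D)<k-1<\rho(D^{\#})$ by Lemma~\ref{Lu} together with the sign of the cubic preceding Theorem~\ref{mainth1} at $\lambda=k-1$; and Corollary~\ref{EL6} does transfer the conclusion from $\mathcal{D^{**}}(e)$ to $\mathcal{D}(e)$. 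You have therefore correctly isolated the genuinely open content of the conjecture: the range $w=k-s$ with $1\le s<\sqrt t+1$ when $k$ is not large compared with $t$, which is precisely where the hypothesis $k>4t^{4}+4$ is consumed in Lemma~\ref{TL}.

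But at exactly that point the proposal stops being a proof. The inequality you ``hope'' for, $\mathbf 1^{T}(A_{12}A_{21})^{i}\mathbf 1\le\bigl(\mathbf 1^{T}(A_{12}A_{21})\mathbf 1\bigr)\rho(A_{21}A_{12})^{i-1}$, is not justified: for a non-symmetric nonnegative matrix $M=A_{12}A_{21}$ one only gets $\mathbf 1^{T}M^{i}\mathbf 1\le C\,\rho(M)^{i-1}$ with a constant $C$ governed by the overlap of $\mathbf 1$ with the left and right Perron vectors of $M$, and nothing in the $\mathcal{D^{**}}$ structure visibly controls that constant; nor is the claim $\rho(A_{21}A_{12})=O(t)$ established (the available bound $\nu(A_{12})\nu(A_{21})\le\sqrt{e_1e_2}$ is of order $sk$ in this range, and the clique-number constraint only forbids large \emph{bidirected} stars, not large one-way degrees). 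The fallback ``compression toward $D^{\#}$'' is likewise only described, and its central obstruction --- that absorbing an outside vertex into the clique is not arc-count-preserving --- is acknowledged but not resolved. What you have is an accurate map of where the difficulty lives and a reasonable research plan; the conjecture remains open after your argument exactly where it was open before it.
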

\textbf{Acknowledgments}
The authors are grateful to the referees for their valuable corrections and suggestions
which lead to a great improvement of this paper.


\begin{thebibliography}{1}
\bibitem{Br}R.~A.~Brualdi, Spectra of digraphs, {\it Linear Algebra and its Applications,} 432(2010) 2181-2213.

\bibitem{BrH}R.~A.~Brualdi, A.~J.~Hoffman, On the spectral radius of $(0,1)$-matrices, {\it Linear Algebra and its Applications,} 65(1985) 133-146.

\bibitem{butler2014}B.~K.~Butler, P.~H.~Siegel, Sharp bounds on the spectral radius of nonnegative
matrices and digraphs, {\it Linear Algebra and its Applications,} 439(2013) 1468-1478.

\bibitem{Ch}L.~Ching, A bound on the spectral radius of matrices of zeros and ones, {\it Linear Algebra and its Applications,} 132(1990) 179-183.

\bibitem{DrL}S.~W.~Drury, H.~Q.~Lin, Extremal digraphs with given clique number, {\it Linear Algebra and its Applications,} 439(2013) 328-345.

\bibitem{Fr}S.~Friedland, The maximal eigenvalue of $0-1$ matrices with prescribed number of ones, {\it Linear Algebra and its Applications,} 69(1985) 33-69.

\bibitem{HoJ}R.~A.~Horn, C.~R.~Johnson, {\it Matrix analysis,} Cambridge University Press, New York, 1985.

\bibitem{LiD}H.~Q.~Lin, S.~W.~Drury, The maximum Perron roots of digraphs with some given parameters. {\it Discrete Mathematics,} 313(2013) 2607-2613.

\bibitem{LiS}H.~Q.~Lin, J.~L.~Shu, Spectral radius of digraphs with given chromatic number, {\it Linear Algebra and its Applications,} 434(2011) 2462-2467.

\bibitem{LiSW}H.~Q.~Lin, J.~L.~Shu,Y.~R.~Wu, G.~L.~Yu, Spectral radius of strongly connected digraphs, {\it Discrete Mathematics} 312(2012) 3663-3669.

\bibitem{MaO}A.~W.~Marshall, I.~Olkin, {\it Inequalities: Theory of Majorization and Its Applications,} Academic, New York, 1979.

\bibitem{Sn}J.~Snellman, The maximal spectral radius of a digraph with $(M+1)^2-S$ edges, {\it Electron.~J.~Linear Algebra,} 10(2003) 179-189.

\bibitem{Zhang2002}X.-D.~Zhang and J.-S.~Li, Spectral radius of non-negative matrices and digraphs, {\it Acta Math.~Sin.,} 18(2002) 293-300.
\end{thebibliography}
\end{document}